\begin{document}

\title{Distality Rank}

\author{Roland Walker}
\address{University of Illinois at Chicago\\Department of Mathematics, Statistics, and Computer Science\\851 S Morgan St\\322 SEO, MC 249\\Chicago, Illinois 60607-7045}
\email{rwalke20@uic.edu}
\begin{abstract}
Building on Pierre Simon's notion of distality, we introduce distality rank as a property of first-order theories and give examples for each rank $m$ such that $1\leq m \leq \omega$. For NIP theories, we show that distality rank is invariant under base change. We also define a generalization of type orthogonality called $m$-determinacy and show that theories of distality rank $m$ require certain products to be $m$-determined. Furthermore, for NIP theories, this behavior characterizes $m$-distality. If we narrow the scope to stable theories, we observe that $m$-distality can be characterized by the maximum cycle size found in the forking ``geometry,'' so it coincides with $(m-1)$-triviality. On a broader scale, we see that $m$-distality is a strengthening of Saharon Shelah's notion of $m$-dependence.
\end{abstract}


\maketitle

\section{Introduction}

In this paper, we define and develop distality rank and strong distality rank as classification tools for first-order model theory. The local versions of these ranks classify EM-types; however, in the natural fashion, they may also be applied globally to classify first-order theories. Both ranks are generalizations of distality which was introduced in 2013 by Pierre Simon \cite{simon:distalandnondistal}. 

The introduction of distality was motivated as an attempt to better understand unstable NIP theories by studying their stable and ``purely unstable,'' or distal, parts separately. This decomposition is particularly easy to see for algebraically closed valued fields where the stable part is the residue field and the distal part is the value group. The approach of studying stable and distal parts independently can also be applied to types over NIP theories where each type can be decomposed into a generically stable partial type and an order-like quotient \cite{simon:typedecomposition}.

Distality quickly became interesting and useful in its own right, and much progress has been made in recent years studying distal theories. Such a theory exhibits no stable behavior since it is dominated by its order-like component. There are many interesting examples:
All o-minimal theories are distal, and so are the $p$-adics \cite{simon:distalandnondistal}. Philipp Hieronymi and Travis Nell developed criteria for determining when certain expansions of o-minimal theories remain distal \cite{hieronymi:distalpairs}, and Nell continued this work by studying distal behavior in dense pairs of o-minimal structures \cite{nell:distalbehavior}. In 2018, the asymptotic couple of the field of logarithmic transseries was shown to be distal by Allen Gehret and Elliot Kaplan \cite{gehret:distalitytransseries}, and in 2020, Aschenbrenner, Chernikov, Gehret, and Ziegler explored distality in valued fields and, among other things, proved that the differential field of the logarithmic-exponential transseries is distal \cite{aschenbrenner:distalityinvaluedfields}.

Many classical combinatorial results can be improved when study is restricted to objects definable in distal structures. Moreover, in \cite{chernikov:cuttinglemma}, where they developed a definable version of the Cutting Lemma, Artem Chernikov, David Galvin, and Sergei Starchenko proposed that ``distal structures provide the most general natural setting for investigating questions in `generalized incidence combinatorics.'\,'' In \cite{boxall:definablepq}, Gareth Boxall and Charlotte Kestner proved that a definable version of the $(p,q)$-Theorem, first conjectured by Chernikov and Simon in \cite{chernikov:externallydefinableII}, holds for distal structures.

Perhaps the most notable combinatorial result was obtained by Chernikov and Starchenko. In \cite{chernikov:regularitylemma}, they presented a definable version of the Szemer\'{e}di Regularity Lemma for distal structures. Although their result applies to infinite, as well as finite, $k$-partite hypergraphs, for easier comparison to the standard Szemer\'{e}di Regularity Lemma, we state their findings for finite graphs: Given $\MM$ a distal structure and $E\subseteq M\times M$ a definable edge (i.e., symmetric and irreflexive) relation, there is a constant $c$ such that for all finite induced graphs $(V, E)$ and all $\varepsilon >0$, there is a uniformly definable partition $P$ of $V$ with size $O(\varepsilon^{-c})$ whose defect $D\subseteq P\times P$ is bounded by 
\[
\sum_{(A,B)\in D}\abs{A}\abs{B} \leq \varepsilon \abs{V}^2
\]
such that the induced bipartite graph $(A,B,E)$ on every non-defective pair $(A,B)\in (P\times P)\setminus D$ is homogeneous (i.e., complete or empty).

In the same paper \cite{chernikov:regularitylemma}, Chernikov and Starchenko developed a definable version of the strong Erd\H{o}s-Hajnal property and showed that this property fully characterizes distal structures. Many other interesting characterizations of distality exist. For example, Itay Kaplan, Saharon Shelah, and Pierre Simon showed that an NIP theory has exact saturation if and only if it is not distal \cite{kaplan:exactsaturation}. 

\bigskip

Distal theories can be characterized by the following property: if 
\[
	\II_0 +\II_1 +\II_2+\cdots +\II_{n-1}+\II_n
\] 
is an indiscernible sequence, where each cut is Dedekind (i.e., the cut has no immediate predecessor or successor), and $A = (a_0,\ldots,a_{n-1})$ is such that each sequence
\begin{align*}
\II_0 + a_0 + \II_1 +\II_2 &+\cdots +\II_{n-1} + \II_n, \\
\II_0 + \II_1 +a_1 + \II_2 &+\cdots +\II_{n-1} + \II_n, \\
&\vdots \\
\II_0 + \II_1 +\II_2 +\cdots &+\II_{n-1} + a_{n-1}+ \II_n
\end{align*}
is indiscernible, then the sequence
 \[
 	\II_0 + a_0 +\II_1+a_1 +\II_2+a_2 +\cdots +\II_{n-1} + a_{n-1}+ \II_n
 \] 
 is also indiscernible. In other words, if we check that $\II$ remains indiscernible after inserting each singleton of $A$ by itself, then $\II$ remains indiscernible after inserting all of $A$ simultaneously. It seems natural to study weaker forms of this property.
 Our research program was motivated by the following questions:
 
 \begin{ques} \label{ques:dr2}
 	Are there theories where it is not always sufficient to check the singletons of $A$, but it is always sufficient to check the pairs of $A$? 
 \end{ques}
\begin{ques}\label{ques:drm}
	 Are there theories where it is not always sufficient to check the elements of $[A]^{m-1}$, but it is always sufficient to check the elements of $[A]^{m}$? 
\end{ques}
 \begin{ques}
 
    Is it interesting to study the generalizations of distality suggested by Questions \ref{ques:dr2} and \ref{ques:drm} outside NIP?
 \end{ques}

This paper answers these questions in the affirmative and introduces the notion of distality rank. 
In Section \ref{s:distality rank}, we develop the concept of distality rank for EM-types and theories in such a way that a theory which satisfies the condition of Question \ref{ques:drm} is said to have \emph{distality rank $m$}.
In particular, a theory has distality rank 1 if and only if it is distal (see \cite[Definition 3.1]{simon:distalandnondistal}).
Distality rank is robust in many ways. For example, adding named parameters to a theory does not increase its distality rank (Proposition \ref{prop:DRA}); furthermore, if the theory is NIP, its distality rank is completely unaltered by base changes (Theorem \ref{thm:basechange}).

In Section \ref{s:strong distality}, we define strong distality rank which generalizes Simon's ``external characterization of distality'' (see \cite[Lemma 2.7]{simon:distalandnondistal}). Proposition \ref{prop:boundsdr} puts a bound on strong distality rank, and thus distality rank, for theories with quantifier elimination in languages where every function symbol is unary. In Section \ref{s:examples}, we use this result to give examples of theories for each distality rank. It is interesting to note that, although distality rank 1 completely excludes stable theories (Proposition \ref{prop:stablenotdistal}), we find several stable theories with distality rank 2. 
Thus, higher distality ranks no longer isolate ``purely unstable'' behavior but, rather, measure the degree to which products of certain invariant types behave deterministically as discussed later in Section \ref{s:type determinacy}.

In Section \ref{s:m-dependence}, we show that $m$-distality is a strengthening of Shelah's notion of $m$-dependence. 
For a nice overview of $m$-dependence, see \cite{chernikov:n-dependence}. 
After its introduction in \cite{shelah:definablegroupsfordependent} and \cite{shelah:stronglydependenttheories}, there has been substantial research into the structural consequences of $m$-dependence. 
For example, in a series of papers, Artem Chernikov and Nadja Hempel have been exploring the properties of $m$-dependent groups and fields. 
In  \cite{hempel:ndepgroupfields}, Hempel shows that $m$-dependent fields are Artin-Schreier closed, and in \cite{chernikov:n-depgroupfieldsII}, Chernikov and Hempel work towards proving a conjecture that there are no strictly $m$-dependent fields for $m\geq 2$, garnering several interesting results along the way. They also show that $m$-dependence is preserved by Mekler's construction \cite{chernikov:Mekler}. In  \cite{terry:vcldimension}, Caroline Terry uses $m$-dependence and the associated $\VC_m$-dimension, or rather its dual, to improve what was previously known about the jumps in the speeds of hereditary $L$-properties. 
Finally, Artem Chernikov and Henry Towsner develop a hypergraph regularity lemma based on $\VC_m$-dimension \cite{chernikov:HypergraphRegularity}. 
It is reasonable to conjecture that $m$-distality will further strengthen some of these structural results in much the same way that requiring distality strengthened several results previously known for NIP. 
In particular, a long-term goal of our research program is to produce a more homogeneous version of the Chernikov--Towsner hypergraph regularity lemma \cite[Theorem 1.1]{chernikov:HypergraphRegularity} using $m$-distality.  
 
Several of the structural consequences of distality have analogues for theories of higher distality rank.
For example, in \cite{simon:distalandnondistal}, Simon proves that an NIP theory is distal if and only if any two global invariant types which commute are orthogonal. Recall that two global invariant types $p(x)$ and $q(y)$ are \emph{orthogonal} exactly when their union $p(x)\cup q(y)$ completely determines their product $(p\otimes q)(x,y)$. We introduce the notion of $m$-determinacy which generalizes orthogonality (see Definition \ref{def:mdetermined}). In particular, the types $p$ and $q$, as above, are orthogonal if and only if their product $p\otimes q$ is 1-determined. In $m$-distal theories (NIP or IP), every product $p_0\otimes\cdots\otimes p_{n-1}$ of global invariant types which commute pairwise is $m$-determined (Proposition \ref{prop:distalortho}). Furthermore, if the theory is NIP, this property characterizes $m$-distality (Theorem \ref{thm:distalortho}).

Finally, in Section \ref{s:geometric stability}, we explore distality rank in the context of stable theories and prove that $m$-distality coincides with $(m-1)$-triviality as defined by Goode in Section 1 of \cite{goode:trivialconsiderations}. 
(See Theorem \ref{thm:distal iff trivial}.)
If a superstable theory is not trivial, then we can find a 3-cycle (see Definition \ref{def:cycle}) among the realizations of some (possibly imaginary) regular type. 
Furthermore, since forking dependence defines a pregeometry on the set of realizations of a regular type, we can ``expand'' that 3-cycle to form arbitrarily large cycles. 
Thus, if a superstable theory is trivial, its distality rank is $2$. 
Otherwise, its distality rank is $\omega$. (See Proposition \ref{prop:DR gap for superstable}.)

\subsection*{Acknowledgments}
The author would like to thank John Baldwin, James Freitag, Anand Pillay, and the anonymous referee for helpful comments on earlier drafts, Pierre Simon for suggesting that $\ORPG_m$ provides examples of $\EM$-types whose distality rank and strong distality rank disagree, Chris Laskowski for helpful discussions concerning the results appearing in Section \ref{s:geometric stability}, Artem Chernikov for allowing the inclusion of Proposition \ref{prop:m-distal implies m-dependent}, and David Marker for continued support and thoughtful advice throughout the entire research and writing process.

\section{Preliminaries and Notation}\label{s:preliminaries}

Throughout this paper, unless otherwise specified, assume we have fixed $L$ an arbitrary language, $T$ a complete first-order $L$-theory with infinite models, and $\mathcal{U} \models T$ a monster model which is universal and strongly $\hat{\kappa}$-homogeneous for some sufficiently large cardinal $\hat{\kappa}$ (see\ \cite[Definition 6.15 and Theorem 6.16]{tent:modeltheory}). We say a set is \emph{small} if its cardinality is strictly less than $\hat{\kappa}$; otherwise, we say the set is \emph{large}.

If $A \subseteq U$ is a set of parameters, we use $L_A$ to denote the language $L\cup \left\{ c_a:a\in A\right\}$ where each $c_a$ is a constant symbol, $\mathcal{U}_A$ to denote the expansion of $\mathcal{U}$ to the $L_A$-structure satisfying $c_a=a$ for each $a\in A$, and $T_A$ to denote $\Th\left(\mathcal{U}_A\right)$ the full theory of the expansion.

We frequently overload a language symbol $L$ using it to denote the set of all $L$-formulae. If we wish to specify free variables, we use $L(x_0, \ldots , x_{n-1})$ to denote the set of all $L$-formulae with free variables among $x_0, \ldots , x_{n-1}$. Alternatively, we may write $L(\kappa)$ for $L(x)$ where $\kappa = \abs{x}$ is the tuple size of the free variable. 

\begin{note}
	Any variable or parameter may be a tuple, finite or infinite, unless otherwise specified.
\end{note}

\subsection{Types and Type Spaces}\label{ss:types and type spaces}

Let $b\in U$. We use $\tp_A(b)$ to denote the complete type of $b$ over $A$; i.e.,
\[
	\tp_A(b) = \{ \phi \in  L_A(|b|) \, : \, \mathcal{U} \models \phi(b) \}.	
\]
  Given $b_1, b_2 \in U$, we write $b_1 \equiv_A b_2$ if $b_1$ and $b_2$ have the same complete type over $A$. We use $S_A(x)$ to denote the set of all $x$-types over $A$; i.e., 
\[
	S_A(x) = \left\{ \tp_A(b) \, : \, b \in U^{\abs{x}} \right\}.
\]
Alternatively, we may write $S_A(\kappa)$ where $\kappa = \abs{x}$ is the tuple size of the free variable. Of course, if we omit the subscript $A$ in any of the above, we mean to be working in $L$ and $T$ without named parameters.

Suppose $A\subseteq B \subseteq D \subseteq U$ and $p\in S_B(x)$. We use $p\dhr_A$ to denote the restriction of $p$ to a type in $S_A(x)$.
Furthermore, if $p$ is invariant over $A$ (see \mbox{Definition \ref{def:invariant} and following}) and there is a unique type in $S_D(x)$ which extends $p$ while remaining invariant over $A$, we use $p\uhr^D$ to denote that extension.

\subsection{Indiscernible Sequences and EM-Types}
Let $(x_k \, : \, k < \omega)$ be a sequence of variables of uniform tuple size, finite or infinite.  Suppose $A \subseteq U$ is a small set of parameters. 

\begin{definition}
A \emph{partial EM-type} over $A$ with tuple size $\abs{x_0}$ is any
\[\Gamma \subseteq  L_A(x_k \, : \, k < \omega)\]
which is consistent with the collection of all formulae
\[
	\phi(x_0, \ldots, x_{n-1}) \leftrightarrow \phi(x_{k_0}, \ldots, x_{k_{n-1}})
\]
where $n < \omega$, $\phi \in  L_A(x_0, \ldots, x_{n-1})$, and $k_0 < \cdots < k_{n-1} < \omega$. We denote the set of all complete EM-types over $A$ with tuple size $\abs{x_0}$ as 
\[
	S_A^{\EM}(x_k \, : \, k < \omega).
\]
Alternatively, we may write $S_A^{\EM}(\kappa \cdot \omega)$ where $\kappa = \abs{x_0}$.
\end{definition}

Let $\II = (b_i \, : \, i \in I) \subseteq U^{\abs{x_0}}$ be a sequence indexed by some infinite linear order $(I,<)$.

\begin{note}
Throughout this paper, all sequences of parameters are assumed to be small.	
\end{note}

\begin{definition}
Given $\phi \in  L_A(x_0, \ldots, x_{n-1})$ for some $n < \omega$, we write $\II \EMM \phi$ if
\[
	\mathcal{U} \models \phi(b_{i_0}, \ldots, b_{i_{n-1}})
\]
for all $i_0 < \cdots <  i_{n-1} \in I$.
\end{definition}

\begin{definition}
 The \emph{partial EM-type of $\II$ over $A$} is defined as follows:
\[
	\EM_A(\II) = \{ \phi \in  L_A(x_k \, : \, k < \omega) \, : \, \II \models^{\EM} \phi \}.
\]
If $\JJ$ is also an infinite sequence and $\EM_A(\II) = \EM_A(\JJ)$, we write $\II \equiv_A^{\EM} \JJ$. If $\EM_A(\II)$ is complete, then we say that $\II$ is \emph{indiscernible} over $A$. In this case, we will often use the notation $\tp_A^{\EM}(\II)$ to emphasize that the EM-type is complete. 
\end{definition}

\begin{definition}
	We say a collection $(\II_\alpha : \alpha < \lambda)$ of infinite sequences is \emph{mutually indiscernible over} $A$ if each $\II_\alpha$ is indiscernible over $A \cup \bigcup_{\beta \ne \alpha} \II_\beta$.
\end{definition}

\subsection{Alternation Rank and NIP}

\begin{definition}
If $\phi \in  L_U(x)$ and $\II=(b_i:i\in I)\subseteq U ^{\abs{x}}$ is an infinite indiscernible sequence indexed by $(I,<)$, we use $\alt(\phi, \II)$ to denote the \emph{number of alternations of $\phi$ on $\II$}, i.e., 
\[ 
	\alt(\phi, \II)=\sup\left\{n<\omega \; : \; \exists \, i_0<\cdots < i_n\in I \quad \UU \models \bigwedge_{j<n} \neg [\phi(b_{i_j}) \leftrightarrow \phi(b_{i_{j+1}})] \right\} .
\]
Furthermore, we use $\alt(\phi)$ to denote the \emph{alternation rank of $\phi$}, i.e.,
\[ \alt(\phi)= \sup\left\{ \alt(\phi, \II) \; : \; \II\subseteq U^{\abs{x}} \text{ is an infinite indiscernible sequence} \right\} . \]
\end{definition}

\begin{definition}
A formula $\phi \in  L(x,y)$ is \emph{IP} if there is a $d \in U^{\abs{y}}$ such that $\alt(\phi(x,d))=\infty$. Moreover, the theory $T$ is \emph{IP} if there is a $\phi\in  L_U(x)$ with $\alt(\phi)=\infty$.
In both cases, we use \emph{NIP} to denote the, often more desirable, condition of not being IP. 
\end{definition}

\subsection{Cuts and Partitions}

Let  $(I, <)$ be an infinite linear order.
%
%

\begin{definition}
	We call an ordered pair $\mathfrak{c} = \left( A,B \right)$ of nonempty subsets of $I$ a \emph{cut} of $I$, and write $I=A+B$, if
	\begin{itemize}
		\item $I=A \cup B$ and 
		\item $A<B$ (i.e., $\forall a \in A\; \forall b\in B\; a<b$).
	\end{itemize}
	We often denote the left side of the cut as $\mathfrak{c}^-$ and the right side of the cut as $\mathfrak{c}^+$ (i.e., $\mathfrak{c}^- = A$ and $\mathfrak{c}^+ = B$).
\end{definition}

\begin{definition}
	A cut $\mathfrak{c}$ is \emph{Dedekind} if
	\begin{itemize}
		\item $\cc^-$ has no maximum element and
		\item $\cc^+$ has no minimum element.
	\end{itemize}
\end{definition}

\begin{definition}
	If $A$ and $B$ are nonempty subsets of $I$ such that
	\begin{itemize}
		\item $A<B$ and
		\item no element of $I$ separates $A$ from $B$ (i.e., $\nexists i\in I \; A<i<B$),
	\end{itemize}
	we write $\cut(A,B)$ to denote the unique cut of the form $I=A'+B'$ with $A\subseteq A'$ and $B\subseteq B'$.
\end{definition}

The notation $I=A+B$ indicates that the cut determines a partition of $I$.

\begin{definition}
	If $I=I_0 \cup \cdots \cup I_n$ and $I_0 < \cdots < I_n$, we write $I_0 + \cdots + I_n$ to denote the partition of $I$ determined by the cuts $\mathfrak{c}_i = \cut\left(I_i, I_{i+1}\right)$.
	Moreover, we call that partition \emph{Dedekind} if each of the cuts $\mathfrak{c}_i$ is Dedekind.
\end{definition}
\noindent When discussing a partition $I_0+\cdots +I_n$, we often assume the cuts are labeled as above, where $\cc_i=\cut(I_i,I_{i+1})$, unless otherwise specified.

\subsection{Limit Types}

Let $(I, <)$ be a linear order, and let $\II = (b_i \, : \, i \in I) \subseteq U$ be a sequence of tuples.

\begin{definition}
	Given $A\subseteq U$, if the partial type
	\[
	\{ \phi \in  L_A(x) \, : \, \exists i \in I \; \forall j \geq i \; \; \mathcal{U} \models \phi(b_j) \}
	\]
	is complete, we call it the \emph{limit type of $\II$ over $A$}, written $\limtp_A(\II)$. Moreover, if it exists, we call $\limtp_U(\II)$ the \emph{global limit type of $\II$} and may simply write $\lim(\II)$.
\end{definition}

\noindent Notice that if $\II$ is indiscernible, then $\limtp_\II(\II)$ exists. Furthermore, since NIP formulae have finite alternation rank, when $T$ is NIP and $\II$ is indiscernible, the global limit type $\lim(\II)$ exists.

\begin{definition} 
Given $A\subseteq U$ and cuts $\mathfrak{c}_0, \ldots , \mathfrak{c}_{n-1}$ of $I$, we define the limit type 
$\limtp_{A}(\mathfrak{c}^{\bullet}_0, \ldots , \mathfrak{c}^{\bullet}_{n-1})$, where each $\cc^{\bullet}_i \in \{ \cc_i, \, \cc^-_i, \cc^+_i \}$, as follows: given $\phi\in  L_A(x_0,\ldots, x_{n-1})$,
	
\begin{align*}
	\phi\in \limtp_{A}(\mathfrak{c}^{\bullet}_0, \ldots , \mathfrak{c}^{\bullet}_{n-1}) \text{ iff} &\text{ there exists }(j_0,\ldots, j_{n-1})\in \mathfrak{c}^-_0\times \cdots \times \mathfrak{c}^-_{n-1} \\ 
	& \phantom{\text{ there  i }} \text{ and }(k_0,\ldots, k_{n-1})\in \mathfrak{c}^+_0\times \cdots \times \mathfrak{c}^+_{n-1}\\ 
	&\text{ such that } \mathcal{U}\models \phi(b_{i_0}, \ldots, b_{i_{n-1}}) \\
	&\phantom{\text{ there  i }}  
	\text{ for all } (i_0, \ldots, i_{n-1}) \in \prod_{i<n} (j_i, k_i)\cap C_i \\
	&\phantom{\text{ there  i }} 
	\text{where each } C_i= \left\{\begin{array}{ll}
		I & \text{if } \cc^\bullet_i \text{ is } \cc_i , \\
		\cc^\bullet_i & \text{otherwise.} 
	\end{array}\right.
\end{align*}
Moreover, if it exists, we often simply use $\lim(\mathfrak{c}^{\bullet}_0, \ldots , \mathfrak{c}^{\bullet}_{n-1})$ to denote the global limit type $\limtp_U(\mathfrak{c}^{\bullet}_0, \ldots , \mathfrak{c}^{\bullet}_{n-1})$.
\end{definition}


\subsection{Invariant Types}

Let $A \subseteq D \subseteq U$ with $A$ small.

\begin{definition}\label{def:invariant}
	A type $p \in S_D(x)$ is \emph{invariant over $A$} if for all $\phi(x, d) \in p$ and all $d' \in D$ such that $d' \equiv_A d$, we have $\phi(x, d') \in p$. Moreover, if we do not wish to specify the invariance base, we may simply say $p\in S_D(x)$ is \emph{invariant} to indicate that it is invariant over some small subset of $D$.
\end{definition}

We are mostly interested in invariant types that are global. Indeed, the name is suggestive of the fact that a global type which is invariant over $A$ is invariant under any global automorphism that fixes $A$ pointwise. It is important to note that not every local invariant type can be extended to a global invariant type without changing the invariance base. For example, if $L = \varnothing$ and $T$ is the theory of infinite sets, then given $a\in U$, it follows that $\tp_{\{a \}}(a)$ is invariant over $\varnothing$ but has no extension which is also invariant over $\varnothing$.

\begin{fact}\label{fact:finsat}
	If $p \in S_D(x)$ is finitely satisfiable in $A$, then it is invariant over $A$.
\end{fact}


\begin{cor}
	Suppose $\II$ is a sequence of tuples from $D$. If it exists, $\limtp_D(\II)$ is invariant over $\II$.
\end{cor}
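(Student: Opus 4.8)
The plan is to reduce the statement to Lemma \ref{lem:finsat} by showing that $\limtp_D(\II)$, whenever it exists, is finitely satisfiable in the underlying set of entries of $\II$. Write $p := \limtp_D(\II)$ and let $A$ be the set of entries of $\II$; since all sequences of parameters are assumed small and $\II$ consists of tuples from $D$, we have that $A$ is a small subset of $D$, so Lemma \ref{lem:finsat} is applicable once finite satisfiability is established.

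First I would take an arbitrary finite collection $\phi_0(x), \ldots, \phi_{k-1}(x) \in p$ (with parameters from $D$). By the definition of $\limtp_D(\II)$, for each $\ell < k$ there is $i_\ell \in I$ such that $\UU \models \phi_\ell(b_j)$ for all $j \geq i_\ell$. Since $(I,<)$ is linearly ordered, I can set $i = \max_{\ell < k} i_\ell$, and then $b_i$ simultaneously realizes $\phi_0, \ldots, \phi_{k-1}$; moreover $b_i$ is an entry of $\II$, i.e., $b_i \in A$. (Equivalently, one could note that $p$, being complete, is closed under conjunction, so it suffices to realize a single formula of $p$ by some $b_i$.) This shows $p$ is finitely satisfiable in $A = \II$.

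Applying Lemma \ref{lem:finsat} with this $A$ then gives immediately that $p = \limtp_D(\II)$ is invariant over $\II$, which is the claim.

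I do not expect a genuine obstacle here; the only points requiring a little care are bookkeeping: that ``invariant over $\II$'' means invariance over the set of entries of $\II$, for which the ordering is irrelevant, and that the witnesses to finite satisfiability are actual entries $b_i$ of the sequence rather than some limit object — but this is exactly what the ``eventually true along $\II$'' formulation of $\limtp_D(\II)$ delivers.
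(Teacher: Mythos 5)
Your proof is correct and is exactly the argument the paper intends: the corollary is stated as an immediate consequence of Lemma \ref{lem:finsat}, and the intended (unwritten) step is precisely your observation that any finite subset of $\limtp_D(\II)$ is realized by some $b_i \in \II$ by taking a common tail index, so the limit type is finitely satisfiable in $\II$.
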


Finitely satisfiable (partial) types can always be extended invariantly without changing the invariance base. 

\begin{fact}
	If $\Gamma\subseteq  L_U(x)$ is finitely satisfiable in $A$, then $\Gamma$ extends, not necessarily uniquely, to a global type which is finitely satisfiable in $A$.
\end{fact}

Let $\kappa = \aleph_0 +|A|$. Suppose $\MM\models T$ is $\kappa^+$-saturated with $A\subseteq M\subseteq D$.

\begin{fact}
	If $p\in S_M(x)$ is invariant over $A$, then there is a unique type $p\uhr^D$ in $S_D(x)$ which extends $p$ while remaining invariant over $A$.
\end{fact}

The above facts are well-known and therefore stated without proof. A more detailed discussion can be found on pages 18 and 19 of \cite{simon:guidenip} at the beginning of Subsection 2.2.

\subsection{Morley Sequences}
Let $A\subseteq D\subseteq U$ with $A$ small, and let $\II = (b_i\colon i\in I)\subseteq D$ be a sequence of tuples indexed by $(I,<)$ an infinite linear order.

\begin{fact}\label{fact:Morley}
	If $p \in S_D(x)$ is invariant over $A$ and each 
	$b_i \models p \dhr_{A\,\cup\,\{b_j\,:\,j\,<\,i\}}$,
	then $\II$ is indiscernible over $A$ and $\tp_A^{\EM}(\II)$ is completely determined by $p$.
\end{fact}


\noindent This follows easily by induction. For an alternative presentation of Morley sequences which introduces product types first, see page 21 of \cite{simon:guidenip}.

\begin{definition}
	Given $\II$ as in Fact \ref{fact:Morley}, we call any infinite ordered sequence $\JJ \subseteq U$ such that $\JJ \equiv_A^{\EM} \II$ a \emph{Morley sequence for $p$ over $A$.}
\end{definition}

\begin{note}
	We often use the convention of adding an asterisk to reverse an ordering. For example, in the following lemma, we use $\JJ^*$ to denote $\JJ$ in the reverse order, so $b$ precedes $b'$ in $\JJ^*$ if and only if $b'$ precedes $b$ in $\JJ$.
\end{note}

\begin{lemma}\label{lem:Morley sequence for ultralimit}
Given an endless sequence $\mathcal{J} \subseteq D$ with the same tuple size as $\II$, if  $\mathcal{I} + \mathcal{J}^*$ is indiscernible over $A$ and $\limtp_D(\JJ)$ exists, then $\mathcal{I}$ is a Morley sequence for $\limtp_D(\JJ)$ over $A\mathcal{J}$.
\end{lemma}
\begin{proof}
Choose $b \in \mathcal{I}$, and let $\mathcal{I}' \subseteq \mathcal{I}$ be the largest initial segment of $\mathcal{I}$ that does not contain $b$. By indiscernibility, it follows that $\tp_{A\mathcal{J}\mathcal{I}'} (b) = \limtp_{A\mathcal{J}\mathcal{I}'} (\mathcal{J}).$
\end{proof}

\subsection{Product Types}
Let $A\subseteq D\subseteq U$ with $A$ small, and let $\kappa = \aleph_0 +|A|$. Suppose $\MM\models T$ is $\kappa^+$-saturated with $A\subseteq M\subseteq D$.

\begin{definition}\label{def:product}
	Let $p\in S_D(x)$ and $q \in S_M(y)$. Suppose $q$ is invariant over $A$.
	We define the \emph{product} $p \otimes q \in S_D(x,y)$ as follows: For all $\phi(x,y,z) \in  L$, where the size of $z$ may vary, and all $d \in D^{\abs{z}}$, we have
	\[
	\phi(x,y,d) \in p \otimes q \qquad \Longleftrightarrow  \qquad \forall\, a \models p \dhr_{Ad} \ \ \phi (a,y,d) \in q\uhr^U.
	\]
\end{definition}

One can easily check that the above product is a complete type and that the product operation is associative. If, in addition, $p$ is invariant over $A$, then the product is also invariant over $A$. 

\begin{note}
	We choose to resolve products from left to right. The reader should be aware that some authors resolve finite products from right to left (i.e., $\phi(x,y,d) \in p\otimes q \Leftrightarrow \forall b\models q\dhr_{Ad}\, \phi(x,b,d)\in p\uhr^U$) but then resolve infinite products from left to right. We, however, find it easier to keep the same order for both.
\end{note}

\begin{definition}
	Suppose $p(x),q(y)\in S_M$ are invariant over $A$. We say $p$ and $q$ commute if $p(x) \otimes q(y) = q(y) \otimes p(x)$.
\end{definition}

\begin{lemma}\label{lem:realizedtypescommute}
	If $p\in S_M(x)$ is realized in $M$ and $q\in S_M(y)$ is invariant over $A$, then $p$ and $q$ commute. 
\end{lemma}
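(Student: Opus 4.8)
The plan is to pick a realization $a\in M$ of $p$, so that $p=\tp_M(a)$, and to show that for every $\phi(x,y,z)\in\LL$ and every tuple $d$ from $M$,
\[
	\phi(x,y,d)\in p\otimes q\quad\Longleftrightarrow\quad\phi(a,y,d)\in q\uhr^U\quad\Longleftrightarrow\quad\phi(x,y,d)\in q\otimes p,
\]
which yields $p\otimes q=q\otimes p$ at once. Before running this I would record that the products make sense: a realized type is invariant over any of its realizations, so $p$ is invariant over $\{a\}$ and its unique global extension invariant over $\{a\}$ is simply $\tp_U(a)$; moreover $q$, being invariant over $A$, is invariant over $A\cup\{a\}$ as well. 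Thus, after enlarging $M$ to a sufficiently saturated model (which changes neither product), both $p\otimes q$ and $q\otimes p$ are defined, $p\uhr^U=\tp_U(a)$, and $q\uhr^U$ is the global extension of $q$ invariant over $A$.

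For the first equivalence, unravelling Definition \ref{def:product} gives $\phi(x,y,d)\in p\otimes q$ iff $\phi(a',y,d)\in q\uhr^U$ for every $a'\models p\dhr_{Ad}$. Taking $a'=a$ gives one direction. For the other, any $a'\models p\dhr_{Ad}=\tp_{Ad}(a)$ satisfies $a'\equiv_{Ad}a$, so by strong homogeneity of $\UU$ there is $\tau\in\Aut_{Ad}(\UU)$ with $\tau(a)=a'$; since $\tau$ fixes $A$ pointwise it leaves the $A$-invariant global type $q\uhr^U$ setwise fixed, and since it fixes $d$ we obtain $\phi(a,y,d)\in q\uhr^U\iff\phi(a',y,d)\in q\uhr^U$. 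For the second equivalence, unravelling the definition with $q$ as the left factor and $p$ (invariant over $A\cup\{a\}$) as the right factor gives $\phi(x,y,d)\in q\otimes p$ iff $\phi(x,b,d)\in p\uhr^U$ for every $b\models q\dhr_{Aad}$; since $p\uhr^U=\tp_U(a)$, this says $\UU\models\phi(a,b,d)$ for every $b\models q\dhr_{Aad}$. Now $\phi(a,y,d)$ is an $\LL_{Aad}$-formula, so exactly one of it and its negation lies in the complete type $q\dhr_{Aad}$, and the preceding condition holds precisely when $\phi(a,y,d)\in q\dhr_{Aad}$; since $q\dhr_{Aad}=(q\uhr^U)\dhr_{Aad}$, this is equivalent to $\phi(a,y,d)\in q\uhr^U$. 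Chaining the two equivalences finishes the proof.

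The argument has no deep obstacle; the only thing to watch is that a realized $p$ need not be invariant over the fixed base $A$, so $q\otimes p$ is not literally an instance of Definition \ref{def:product} with base $A$. This is handled by the observation that $p$ is invariant over $\{a\}$ with $p\uhr^U=\tp_U(a)$, together with passing to the base $A\cup\{a\}$ (and a larger saturated model if needed), after which the proof is a routine unfolding of the product definition and the automorphism-invariance of global invariant types.
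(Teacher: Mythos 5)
Your proof is correct and takes essentially the same route as the paper: pick a realization $a \in M$ of $p$ and check the membership of an arbitrary $\phi(x,y,d)$ in both products by reducing each to the condition $\phi(a,y,d)\in q$ (or, as you write it more carefully, $\phi(a,y,d)\in q\uhr^U$). The paper's version is a terse three-line chain of equivalences that leaves implicit both the automorphism/invariance argument showing $\phi(a',y,d)\in q\uhr^U$ is independent of the choice of $a'\models p\dhr_{Ad}$, and the change of base to $A\cup\{a\}$ needed so that $q\otimes p$ is an instance of Definition~\ref{def:product}; you make both of these explicit, which is a reasonable thing to do and does not change the argument.
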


\begin{proof}
	Let $a\in M$ realize $p$. Given $\phi(x,y,d)\in  L_M$, we have
	\begin{align*}
		\phi(x,y,d)\in p \otimes q \quad & \Longleftrightarrow \quad \phi(a,y,d)\in q \\
		& \Longleftrightarrow\quad  \forall b\models q \dhr_{Aad} \; \phi(x,b,d)\in p \\
		& \Longleftrightarrow\quad  \phi(x,y,d)\in q \otimes p .
	\end{align*}
\end{proof}

\begin{definition}
	If $p\in S_M(x)$ is invariant over $A$ and $n > 0$, we use $p^n$ to denote the \emph{$n$-fold product of $p$} given by 
	\[
	p^n = p(x_0) \otimes \cdots \otimes p(x_{n-1}) \in S_M( \abs{x} \cdot n).
	\]
	Furthermore, we define the \emph{$\omega$-fold product of $p$} as
	\[
	p^{\omega} = \bigcup_{0 < n < \omega} p^n \in S_M(\abs{x} \cdot \omega).
	\]
\end{definition}

\noindent Notice that if $\II$ is a Morley sequence for $p$ over $A$, then $\II \models^{\EM} p^{\omega} \dhr_A$.

\begin{lemma} \label{lem:morley}
	Given a sequence 
	\[
	\II+\JJ +\KK = (b_i\colon i\in I) + (b_j\colon j\in J) +(b_k\colon k\in K)\subseteq U	
	\]
	where $I+J+K$ is a partition of a linear order and $(I,<)$ has no maximum element, suppose $\II+\KK$ is indiscernible over $A$ and $p =\limtp_{A\II\JJ\KK}(\II)$ exists. If $\JJ^*$ is a Morley sequence for $p$ over $A\II\KK$, then $\II +\JJ+\KK$ is  indiscernible over $A$.
\end{lemma}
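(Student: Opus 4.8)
The plan is to prove indiscernibility of $\II+\JJ+\KK$ over $A$ by verifying that any two increasing tuples drawn from the combined index order have the same type over $A$. Fix $n<\omega$ and a tuple $\bar c = (c_0 < \cdots < c_{n-1})$ from $I+J+K$; it splits as $\bar c = \bar\imath \, \bar\jmath \, \bar k$ with $\bar\imath$ from $I$, $\bar\jmath$ from $J$, $\bar k$ from $K$. I want to show $\tp_A(b_{\bar c})$ depends only on the lengths of $\bar\imath$, $\bar\jmath$, $\bar k$ (and not on the particular indices), which gives full indiscernibility. The natural device is to compare an arbitrary such configuration to a ``standard'' one by moving the $J$-part and then the $I$-part into place one coordinate at a time, using the two hypotheses: indiscernibility of $\II+\KK$ over $A$ handles the $I$- and $K$-coordinates, and the fact that $\JJ^*$ is a Morley sequence of $p = \limtp_{A\II\JJ\KK}(\II)$ over $A\II\KK$ handles the $J$-coordinates.

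The key steps, in order: First, I would reduce to showing that for any $\bar\imath\,\bar\jmath\,\bar k$ and $\bar\imath'\,\bar\jmath'\,\bar k'$ with $|\bar\imath|=|\bar\imath'|$, $|\bar\jmath|=|\bar\jmath'|$, $|\bar k|=|\bar k'|$, we have $b_{\bar\imath}b_{\bar\jmath}b_{\bar k}\equiv_A b_{\bar\imath'}b_{\bar\jmath'}b_{\bar k'}$. Second, using that $\II$ has no maximum and $\II+\KK$ is indiscernible over $A$, I can choose the $J$-coordinates' ``shadows'' inside $\II$: because $p$ is the limit type of $\II$ over everything in sight and $\JJ^*$ is Morley for $p$ over $A\II\KK$, Lemma \ref{lem:Morley} tells me that $\tp^{\EM}_{A\II\KK}(\JJ^*)$ is completely determined by $p$, and $p$ is realized as a limit of tail elements of $\II$. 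Concretely, I would show by induction on $|\bar\jmath|$, peeling off the \emph{largest} element of $\bar\jmath$ first (hence working with $\JJ^*$, which is why the statement reverses $\JJ$): if $j^\star = \max \bar\jmath$, then $b_{j^\star}$ realizes $p\dhr_{A b_{\bar\imath} b_{\bar\jmath\setminus j^\star} b_{\bar k}}$ because $\JJ^*$ is Morley for $p$ over $A\II\KK$ and the remaining elements of $\bar\jmath$ come before $j^\star$ in $\JJ$, i.e.\ after it in $\JJ^*$ — so I must order the peeling so that the relevant parameters are already ``below'' in $\JJ^*$. Third, having matched the $J$-coordinates to a configuration where they sit among tail elements of $\II$, I replace each $b_{j}$ by a genuine tail element $b_{i}$ of $\II$ with $i$ larger than all of $\bar\imath$: since $p = \limtp(\II)$, for any fixed formula the value is eventually constant along $\II$, so I can absorb the $J$-block into the $I$-block at the cost of finitely many constraints, reducing to a statement purely about $\II+\KK$, which is indiscernible over $A$ by hypothesis.

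More carefully, the clean way to organize step three is: given $\phi\in\LL_A(x_0,\dots,x_{n-1})$, I want $\UU\models\phi(b_{\bar\imath},b_{\bar\jmath},b_{\bar k})$ iff the same holds for the standard configuration. Writing $\bar\jmath = (j_0<\cdots<j_{r-1})$, I induct downward: the formula $\phi(b_{\bar\imath},b_{j_0},\dots,b_{j_{r-2}},y,b_{\bar k})$, viewed with $y$ in the last $J$-slot, is decided by membership in $p\dhr_{Ab_{\bar\imath}b_{j_0}\cdots b_{j_{r-2}}b_{\bar k}}$ evaluated at $y=b_{j_{r-1}}$; and since $p$ is a limit of tail elements of $\II$, this equals the truth value of $\phi(b_{\bar\imath}, b_{j_0},\dots,b_{j_{r-2}}, b_i, b_{\bar k})$ for $i\in I$ sufficiently large — but \emph{that} configuration, after I likewise handle $b_{j_0},\dots,b_{j_{r-2}}$ by the same move applied to the smaller order $J'=J\setminus\{j_{r-1}\}$ (legitimate since removing the top element of $J$ keeps the hypotheses in force, $\JJ'^*$ still Morley for $p$ over $A\II\KK$ and $\II+\KK$ still indiscernible), lies entirely inside $\II+\KK$. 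Then indiscernibility of $\II+\KK$ over $A$ finishes it. The base case $r=0$ is exactly the hypothesis.

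The main obstacle I anticipate is bookkeeping the order of the reduction so that at every stage the parameters over which I need ``$b_j$ realizes $p\dhr$'' are the ones already available: because $\JJ^*$ is Morley, $b_j$ realizes the restriction of $p$ over $A\II\KK$ together with the $\JJ^*$-elements \emph{preceding} it, i.e.\ the $\JJ$-elements with larger index — so I must eliminate the $J$-coordinates from largest index to smallest, and simultaneously I need the tail of $\II$ I am pushing into to lie above \emph{all} of $\bar\imath$ (not just the used ones), which is fine since $I$ has no maximum but requires care when several limit-type constraints are imposed at once (handled by compactness: finitely many formulas, so finitely many tail-conditions, all simultaneously satisfiable). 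A secondary subtlety is that $p = \limtp_{A\II\JJ\KK}(\II)$ is a type over a set that includes $\JJ$ and $\KK$, so when I restrict it to $Ab_{\bar\imath}b_{\bar\jmath'}b_{\bar k}$ for a sub-tuple $\bar\jmath'$ of the $J$-indices this is a genuine restriction and the Morley-sequence property from Lemma \ref{lem:Morley} applies verbatim; I just need to note $A\II\KK \subseteq A\II\JJ\KK$ so $p$ is in particular invariant over a base contained in $A\II\KK$, making ``$\JJ^*$ Morley for $p$ over $A\II\KK$'' meaningful and the induction in Lemma \ref{lem:Morley} applicable.
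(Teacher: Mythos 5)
Your overall strategy --- fix $\phi$, peel off one $J$-coordinate at a time using the Morley condition to replace it by a tail element of $\II$ via the limit type, and reduce to the indiscernibility of $\II + \KK$ --- is exactly the paper's approach, but you have the peeling order backwards, and that step fails as stated. You peel $j^\star = \max\bar\jmath$ first, asserting $b_{j^\star}\models p\dhr_{A b_{\bar\imath} b_{\bar\jmath\setminus j^\star} b_{\bar k}}$. Since $\JJ^*$ reverses $J$, $j^\star$ is the \emph{first} element of $\bar\jmath$ in $\JJ^*$-order, and the elements of $\bar\jmath\setminus j^\star$ all come \emph{after} it. Being a Morley sequence for $p$ over $A\II\KK$ only guarantees that each element realizes $p$ restricted to $A\II\KK$ together with the \emph{earlier} elements of the Morley sequence; it gives nothing over later elements. (Concretely: if $p$ is the ``greater than everything'' type in $(\mathbb{Q},<)$, the first Morley element is smaller than all subsequent ones, so it does not realize ``$x>m_1$'' over $m_1$.) You in fact flag exactly this issue --- correctly noting that $b_j$ realizes $p$ over the $\JJ^*$-elements \emph{preceding} it, i.e.\ the $J$-elements with \emph{larger} index --- but then draw the backwards conclusion that you should ``eliminate the $J$-coordinates from largest index to smallest.''

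The fix is the opposite order, which is what the paper does: peel off the \emph{smallest} $J$-index $j_0$ first. Then $b_{j_0}$ is the \emph{last} of $\bar\jmath$ in $\JJ^*$-order, so the Morley condition gives $b_{j_0}\models p\dhr_{A\II\KK b_{j_1}\cdots b_{j_{s-1}}}$ over the remaining (larger-index) $J$-parameters. Since $p$ is the limit type of $\II$ over $A\II\JJ\KK$, the formula with a variable in the $j_0$-slot and the other $b_{j_i}$ as parameters lies in $p$ precisely when the inductive hypothesis (with one fewer $J$-coordinate) holds with a large element of $\II$ in that slot. With that reversal, your outline goes through and coincides with the paper's argument; the compactness bookkeeping you worry about is also unnecessary once you organize it as a single induction on the number of $J$-coordinates for a fixed $\phi$.
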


\begin{proof}
	Fix $\phi \in  L_A(x_0, \ldots, x_{n-1})$ and $\ell_0 < \cdots < \ell_{n-1} \in I$. We claim that for all $r,s,t<\omega$ such that $r+s+t=n$, if 
	\[
	i_0<\cdots <i_{r-1}\in I, \; j_0 < \cdots < j_{s-1} \in J, \text{ and } k_0<\cdots <k_{t-1}\in K,
	\] 
	then 
	\[
	\UU \models \phi(b_{\ell_0}, \ldots, b_{\ell_{n-1}}) \leftrightarrow \phi(b_{i_0}, \ldots, b_{i_{r-1}}, b_{j_0}, \ldots, b_{j_{s-1}}, b_{k_0}, \ldots, b_{k_{t-1}}).
	\]
	We proceed by induction on $s$.
	
	\bigskip
	\noindent \underline{$s=0$}: Our claim holds since $\II+\KK$ is indiscernible over $A$.
	\bigskip
	
	\noindent \underline{$s> 0$}: Suppose the claim holds for $s-1$. It follows that for all $m\in I$ such that $m> i_{r-1}$, we have
	\[
	\UU \models \phi(b_{\ell_0}, \ldots, b_{\ell_{n-1}}) \leftrightarrow \phi(b_{i_0}, \ldots, b_{i_{r-1}}, b_{m}, b_{j_1}, \ldots, b_{j_{s-1}}, b_{k_0}, \ldots, b_{k_{t-1}}).
	\]
	Since $p$ is the limit type of $\II$, we have 
	\[
	\phi(b_{\ell_0}, \ldots, b_{\ell_{n-1}}) \leftrightarrow \phi(b_{i_0}, \ldots, b_{i_{r-1}}, x, b_{j_1}, \ldots, b_{j_{s-1}}, b_{k_0}, \ldots, b_{k_{t-1}})\in p,
	\]
	so our claim holds since $\JJ^*$ is a Morley sequence for $p$ over $A\II\KK$.
\end{proof}

\begin{lemma}\label{lem:limmutind}
	Suppose $T$ is NIP. If a collection $\left( \II_i:i<n\right)$ of infinite sequences is mutually indiscernible and $\phi (x_0, \ldots , x_{n-1})\in \lim(\II_0)\otimes \cdots \otimes \lim(\II_{n-1})$, then there are end segments $\II_i' \subseteq \II_i$ such that all $\bar{a} \in \II'_0\times \cdots \times \II'_{n-1}$ realize $\phi$.
\end{lemma}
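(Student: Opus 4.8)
The plan is to prove the statement by induction on $n$, peeling off the last factor $\II_{n-1}$ and using NIP to control alternations. First I would set up notation: write $p_i = \lim(\II_i)$, which exists since $T$ is NIP, and recall that by associativity $\lim(\II_0)\otimes\cdots\otimes\lim(\II_{n-1}) = \big(\lim(\II_0)\otimes\cdots\otimes\lim(\II_{n-2})\big)\otimes\lim(\II_{n-1})$. Given $\phi(x_0,\ldots,x_{n-1})\in p_0\otimes\cdots\otimes p_{n-1}$, unwinding Definition \ref{def:product} gives: for every $\bar{a}=(a_0,\ldots,a_{n-2})$ realizing $(p_0\otimes\cdots\otimes p_{n-2})\dhr_{A\bar d}$ (here there is no extra parameter $d$, so just over the relevant base), we have $\phi(\bar a, x_{n-1})\in p_{n-1}\uhr^U = \lim(\II_{n-1})$. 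By definition of the global limit type, this means there is an end segment $\II'_{n-1}\subseteq\II_{n-1}$ such that $\UU\models\phi(\bar a, b)$ for all $b\in\II'_{n-1}$.

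The subtlety is that this end segment $\II'_{n-1}$ depends a priori on the choice of $\bar a$. The hard part will be showing a single end segment works uniformly. Here is where NIP enters: the formula $\psi(x_0,\ldots,x_{n-2},x_{n-1}) := \phi$ has finite alternation rank on the indiscernible sequence $\II_{n-1}$ (with parameters from $\II_0,\ldots,\II_{n-2}$ substituted in), say bounded by some $N$ uniformly in those parameters — this uniformity is standard for NIP (the alternation rank $\alt(\psi)$ is finite and bounds $\alt(\psi(\bar a, \cdot), \II_{n-1})$ for every $\bar a$). Since $\II_{n-1}$ is indiscernible over $A\cup\bigcup_{\beta\neq n-1}\II_\beta$ — and in particular over any finite tuple $\bar a$ drawn from the other sequences — the truth value of $\phi(\bar a, b)$ for $b$ ranging over $\II_{n-1}$ is eventually constant, and "eventually" happens before the $N$-th potential alternation point; combined with the fact that it is eventually \emph{true} (from the previous paragraph), we conclude there is an end segment $\II'_{n-1}$, whose starting point can be chosen from the first $N{+}1$ elements of $\II_{n-1}$ and hence independent of $\bar a$, on which $\phi(\bar a, \cdot)$ holds identically. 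More carefully: pick any element $c$ past the first $N$ elements of $\II_{n-1}$; for every $\bar a$ realizing the truncated product over the appropriate base, $\phi(\bar a, b)$ holds for all $b$ beyond $c$ in $\II_{n-1}$ — otherwise $\phi(\bar a, \cdot)$ would be false somewhere in the initial block and true cofinally, forcing more than $N$ alternations. Set $\II'_{n-1}$ to be this fixed end segment.

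Now I would apply the induction hypothesis. Fix any $b^\ast\in\II'_{n-1}$. I claim $\phi(x_0,\ldots,x_{n-2},b^\ast)\in p_0\otimes\cdots\otimes p_{n-2}$: indeed, for every $\bar a$ realizing $(p_0\otimes\cdots\otimes p_{n-2})\dhr_{Ab^\ast}$, the formula $\phi(\bar a, x_{n-1})\in\lim(\II_{n-1})$ as above, hence $\phi(\bar a, b^\ast)$ holds since $b^\ast\in\II'_{n-1}$ lies in the relevant end segment. Actually, to invoke the definition of the product $(p_0\otimes\cdots\otimes p_{n-2})$ with the extra parameter $b^\ast$, I use that $\bar a\models(p_0\otimes\cdots\otimes p_{n-2})\dhr_{Ab^\ast}$ suffices. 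Then $\big(\II_0,\ldots,\II_{n-2}\big)$ is still a mutually indiscernible collection, and $\phi(x_0,\ldots,x_{n-2},b^\ast)$, now a formula in $n-1$ blocks of variables with parameter $b^\ast$, belongs to $\lim(\II_0)\otimes\cdots\otimes\lim(\II_{n-2})$. By the induction hypothesis there are end segments $\II'_0\subseteq\II_0,\ldots,\II'_{n-2}\subseteq\II_{n-2}$ such that all $(a_0,\ldots,a_{n-2})\in\II'_0\times\cdots\times\II'_{n-2}$ satisfy $\phi(a_0,\ldots,a_{n-2},b^\ast)$. Since $b^\ast$ was an arbitrary element of $\II'_{n-1}$ and $\II'_0,\ldots,\II'_{n-2}$ were obtained independently of which $b^\ast\in\II'_{n-1}$ we chose — one should check this, e.g. by running the induction hypothesis once with $b^\ast$ replaced by the whole parameter sequence, or noting the end segments can be taken uniformly by another appeal to finiteness of alternation rank in each coordinate — we get that all $\bar a\in\II'_0\times\cdots\times\II'_{n-1}$ realize $\phi$, completing the induction. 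The base case $n=1$ is immediate: $\phi(x_0)\in\lim(\II_0)$ means precisely that $\phi$ holds on an end segment of $\II_0$. The step I expect to be the genuine obstacle is the uniformity argument — extracting end segments that do not depend on the realizations $\bar a$ — and the cleanest route is the finite-alternation-rank bound applied one coordinate at a time.
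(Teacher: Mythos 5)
Your overall plan — induction on $n$, peeling off one factor, applying the inductive hypothesis to the rest, then working to make end segments uniform — is structurally similar to the paper's, but the paper's execution is a contradiction argument (assume the lemma fails, explicitly build a single sequence on which $\phi$ alternates infinitely, violate NIP), whereas you attempt a direct argument. That direct route runs into two genuine problems as written.

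The first problem is your alternation-rank justification for the uniform end segment $\II'_{n-1}$. A bound $\alt(\phi(\bar a,\cdot),\II_{n-1})\leq N$ limits how many times the truth value changes along $\II_{n-1}$, but says nothing about \emph{where} those changes occur. For instance, $\phi(\bar a,\cdot)$ could be false on a very long initial segment and true thereafter; that is a single alternation, well within any bound $N\geq 1$, yet the end segment of truth begins arbitrarily far out. So the claim that the starting point ``can be chosen from the first $N+1$ elements'' does not follow, and the sentence ``otherwise $\phi(\bar a,\cdot)$ would be false somewhere in the initial block and true cofinally, forcing more than $N$ alternations'' is simply not a valid inference. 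As it happens, the uniform end segment \emph{does} exist, but for an entirely different and much simpler reason which you do not invoke: every $\bar a\models(p_0\otimes\cdots\otimes p_{n-2})\dhr_{\II_{n-1}d}$ realizes the same complete type over a set containing $\II_{n-1}$ and the parameters $d$ of $\phi$, so the truth pattern of $\phi(\bar a,\cdot,d)$ along $\II_{n-1}$ is literally the same for all such $\bar a$; no NIP is needed at this step. (Note also that your parenthetical ``here there is no extra parameter $d$'' is incorrect — $\phi$ lives in a product of \emph{global} types and in general carries parameters from $U$.)

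The second and deeper problem is the one you flag yourself: after choosing $b^\ast\in\II'_{n-1}$ and applying the inductive hypothesis to $\phi(\cdot,b^\ast)$, the end segments $\II'_0,\ldots,\II'_{n-2}$ you receive depend a priori on $b^\ast$, and neither of your suggested repairs closes this. ``Replacing $b^\ast$ by the whole parameter sequence'' would turn $\phi$ into something with infinitely many parameters, which is not a formula. The indiscernibility argument (that all $b^\ast\in\II_{n-1}$ have the same type over $\bigcup_{i<n-1}\II_i$) is blocked by exactly the same parameter $d$: $\II_{n-1}$ is mutually indiscernible over the other sequences, \emph{not} over $d$, so two distinct $b^\ast,b^{\ast\ast}\in\II_{n-1}$ need not agree on $\phi(\bar c,\cdot,d)$ for $\bar c$ from the other sequences. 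And ``another appeal to finiteness of alternation rank'' has the same flaw as before. This is precisely the point where the paper's contradiction-plus-NIP construction earns its keep: by interleaving, arbitrarily far out, elements on which $\phi$ holds (supplied by the inductive hypothesis and the limit-type end segments) with elements on which $\phi$ fails (supplied by the assumed failure of the lemma), the paper manufactures a single indiscernible sequence on which $\phi$ alternates infinitely, contradicting NIP. That maneuver — not a per-coordinate alternation bound — is the essential content of the lemma, and your proposal currently lacks it.
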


\begin{proof}
	The lemma clearly holds when $n=1$. Assume it holds for some $n\geq 1$ but fails for the collection $(\II, \JJ_0 , \ldots , \JJ_{n-1})$. It follows by Lemma \ref{lem:realizedtypescommute} that none of the indices $I,J_0,\ldots,J_{n-1}$ has a maximum element.
	Let $p(x)=\lim(\II)$ and $q(y_0, \ldots , y_{n-1})=\lim(\JJ_0)\otimes \cdots \otimes \lim(\JJ_{n-1})$.
	Let $\phi(x, \bar{y}, d)\in p\otimes q$, where $\phi(x, \bar{y}, z)\in  L$ and $d\in U$, witness the failure of the lemma.
	Let $\JJ=\JJ_0\times \cdots \times \JJ_{n-1}$, and let $a \models p \dhr_{\II \JJ d}$.
	Since $\phi(x, \bar{y}, d)\in p\otimes q$, it follows that $\phi(a, \bar{y}, d) \in q$.
	Since the lemma holds for $n$, there is an end segment $\JJ'$ of $\JJ$ (i.e., $\JJ'=\JJ'_0\times \cdots \times \JJ'_{n-1}$ with each $\JJ'_i$ an end segment of $\JJ_i$)
	such that all $\bar{b} \in \JJ' $ realize $\phi(a, \bar{y}, d)$.
	It follows that $\phi(x, \bar{b}, d)\in p$ for all $\bar{b}\in \JJ'$;
	therefore, for all such $\bar{b}$, there is an end segment $\II_{\bar{b}}\subseteq \II$ such that all elements of $\II_{\bar{b}}$ realize $\phi(x, \bar{b}, d)$.
	We use this to construct an indiscernible sequence which violates NIP.
	\bigskip
	\begin{description}[style=multiline,leftmargin=2.5cm,font=\normalfont]
		\item[\underline{Stage 0}] Let $\bar{b}_0\in \JJ'$ and $a_0\in \II_{\bar{b}_0}$.
		Let $\II^0 $ be an end segment of $\II$ excluding $a_0$, and let $\JJ^0$ be an end segment of $\JJ'$ excluding $\bar{b}_0$.
		\bigskip
		\item[\underline{Stage $2i+1$}] By our assumption, there is $a_{2i+1} \in \II^{2i}$ and $\bar{b}_{2i+1} \in \JJ^{2i}$ such that $\UU\models \neg \phi(a_{2i+1}, \bar{b}_{2i+1})$.
		Let $\II^{2i+1} $ be an end segment of $\II$ excluding $a_{2i+1}$, and let $\JJ^{2i+1}$ be an end segment of $\JJ'$ excluding $\bar{b}_{2i+1}$.
		\bigskip
		\item[\underline{Stage $2i+2$}] Let $\bar{b}_{2i+2} \in \JJ^{2i+1}$ and $a_{2i+2}\in \II_{\bar{b}_{2i+2}} \cap \II^{2i+1}$.
		Let $\II^{2i+2} $ be an end segment of $\II$ excluding $a_{2i+2}$, and let $\JJ^{2i+2}$ be an end segment of $\JJ'$ excluding $\bar{b}_{2i+2}$.
	\end{description}
	\bigskip
	The constructed sequence $\left( (a_i,\bar{b}_i):i<\omega \right)$ forces $\phi(x, \bar{y}, d)$ to have infinite alternation rank, contradicting NIP.
\end{proof}

\begin{cor}
	Suppose $T$ is NIP. If $\II$ and $\JJ$ are infinite mutually indiscernible sequences, then $\lim(\II)$ and $\lim(\JJ)$ commute.
\end{cor}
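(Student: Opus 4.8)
The plan is to reduce the statement directly to Lemma \ref{lem:limmutind}. Write $p := \lim(\II)$ and $q := \lim(\JJ)$; since $T$ is NIP these global limit types exist, and each is finitely satisfiable in, hence (by Lemma \ref{lem:finsat}) invariant over, the corresponding sequence. Setting $A := \II\JJ$, a small set, both $p$ and $q$ are invariant over $A$, so $p\otimes q$ and $q\otimes p$ are well-defined complete types in $S_U(x,y)$. It therefore suffices to prove the inclusion $p\otimes q \subseteq q\otimes p$: the reverse inclusion follows by applying this to $\neg\phi$, and two complete types in the same variables with one containing the other must be equal.

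So I would fix $\phi(x,y,z)\in\LL$ and $d\in U$ with $\phi(x,y,d)\in p\otimes q = \lim(\II)\otimes\lim(\JJ)$. Applying Lemma \ref{lem:limmutind} to the mutually indiscernible pair $(\II,\JJ)$, one obtains end segments $\II'\subseteq\II$ and $\JJ'\subseteq\JJ$ with $\UU\models\phi(a,b,d)$ for all $a\in\II'$ and $b\in\JJ'$. Now let $b\models q\dhr_{Ad}$ be arbitrary. For each $a\in\II'$, the formula $\phi(a,y,d)$ holds throughout $\JJ'$, which is cofinal in $\JJ$, so $\phi(a,y,d)\in\lim(\JJ) = q$; and since $a\in\II\subseteq A$, this is an $\LL_{Ad}$-formula, hence it lies in $q\dhr_{Ad} = \tp_{Ad}(b)$, giving $\UU\models\phi(a,b,d)$. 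Thus $\phi(x,b,d)$ holds throughout the cofinal set $\II'$, so $\phi(x,b,d)\in\lim(\II) = p = p\uhr^U$. As $b\models q\dhr_{Ad}$ was arbitrary, Definition \ref{def:product} yields $\phi(x,y,d)\in q\otimes p$, which completes the argument.

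I do not anticipate a genuine obstacle, since all the real work is packaged in Lemma \ref{lem:limmutind}; the conceptual point is just that the \emph{uniform rectangle} $\II'\times\JJ'$ supplied by that lemma, together with the choice of an invariance base $A$ containing both sequences, is exactly what lets one transpose the order of the product. The only things to watch are routine: confirming that $\phi(a,y,d)$ really is over $Ad$ so that $b\models q\dhr_{Ad}$ decides it, and noting that the degenerate cases in which $\II$ or $\JJ$ has a maximum are already absorbed into Lemma \ref{lem:limmutind} (and are, alternatively, instances of Lemma \ref{lem:realizedtypescommute}), with ``cofinal end segment'' handling them uniformly.
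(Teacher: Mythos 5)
Your proof is correct and is the argument the paper intends: the corollary is stated without proof, as an immediate consequence of Lemma \ref{lem:limmutind}, and your reduction—apply the lemma with $n=2$ to obtain a rectangle $\II'\times\JJ'$ of simultaneous realizations, then read this rectangle column-by-column to conclude $p\otimes q\subseteq q\otimes p$, with equality following from completeness—is exactly how one fills in that gap. The bookkeeping (choosing $A=\II\JJ$ as a common invariance base, noting $\phi(a,y,d)$ has parameters in $Ad$ so that $q\dhr_{Ad}$ decides it, and observing $p\uhr^U=p$ for global $p$) is all handled correctly.
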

\begin{cor}
	Suppose $T$ is NIP. If $\II$ is an indiscernible sequence with distinct Dedekind cuts $\cc_0,\ldots,\cc_{n-1}$, then 
	\[
	\lim(\cc_0^\bullet,\ldots , \cc_{n-1}^\bullet)= \lim(\cc_0^\bullet)\otimes\cdots \otimes \lim(\cc_{n-1}^\bullet)
	\] 	
	where each $\cc_i^\bullet\in\{ \cc_i^-,\cc_i^+\}$.
\end{cor}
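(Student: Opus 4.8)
The plan is to reduce the statement about a single indiscernible sequence $\II$ with several distinct Dedekind cuts to the previous corollary about mutually indiscernible sequences, by slicing $\II$ along its cuts. Write the cuts in increasing order, say $\cc_0 < \cdots < \cc_{n-1}$ (reindexing if necessary), and decompose $\II = \II_0 + \II_1 + \cdots + \II_{n-1} + \II_n$ where each pair $(\II_i, \II_{i+1})$ realizes the cut $\cc_i$, i.e.\ $\cc_i = \cut(I_i, I_{i+1})$; since each $\cc_i$ is Dedekind, each $I_i$ (for $0 < i < n$) has neither a maximum nor a minimum, and $I_0$ has no maximum while $I_n$ has no minimum. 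Because $\II$ is indiscernible over $A$, the collection of pieces is not literally mutually indiscernible over $A$ as stated, but each piece $\II_i$ together with the rest does remain indiscernible in the appropriate sense; more to the point, for the relevant cut $\cc_i$ we have $\lim(\cc_i^\bullet) = \lim(\II_i)$ if $\cc_i^\bullet = \cc_i^-$ (the limit type of $\II_i$ read left-to-right) and $\lim(\cc_i^\bullet) = \lim(\II_i^*)$ — equivalently the limit type of $\II_{i+1}$ from the right — if $\cc_i^\bullet = \cc_i^+$. So each one-sided limit type at a Dedekind cut of $\II$ is the limit type of an end segment (or reversed initial segment) of one of the pieces.

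Next I would handle the product. The content to be proved is that $\lim(\cc_0^\bullet, \ldots, \cc_{n-1}^\bullet)$, defined via the simultaneous limit-type definition in the excerpt, equals the iterated tensor product of the individual one-sided limit types. I would first argue one inclusion directly from the definition of $\limtp_U(\cc_0^\bullet, \ldots, \cc_{n-1}^\bullet)$: a formula $\phi(x_0, \ldots, x_{n-1})$ in the simultaneous limit type is eventually true on boxes $\prod_{i<n}(j_i, k_i) \cap C_i$, and by Fubini-style reasoning (evaluating the innermost coordinate first, using that $\cc_{n-1}$ is a cut of $\II$ and the other coordinates are frozen inside their respective intervals) one gets membership in $\lim(\cc_0^\bullet) \otimes \cdots \otimes \lim(\cc_{n-1}^\bullet)$; both are complete types, so a single inclusion suffices. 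For the reverse/cleaner route, invoke the preceding corollary: the sequences obtained from the pieces of $\II$ around the chosen cuts can be assembled into $n$ infinite mutually indiscernible sequences $\JJ_0, \ldots, \JJ_{n-1}$ (taking $\JJ_i$ to be $\II_i$ or $\II_{i+1}^*$ according to whether $\cc_i^\bullet$ is $\cc_i^-$ or $\cc_i^+$, after passing to small end segments that are far enough apart to be mutually indiscernible over $A$ using indiscernibility of $\II$ and the Dedekind hypothesis), so that $\lim(\cc_i^\bullet) = \lim(\JJ_i)$, and then the mutually-indiscernible corollary gives $\lim(\JJ_0) \otimes \cdots \otimes \lim(\JJ_{n-1}) = \lim(\cc_0^\bullet, \ldots, \cc_{n-1}^\bullet)$ after checking the two simultaneous-limit-type definitions match up.

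The main obstacle I expect is the bookkeeping in showing the pieces (or their end segments) are genuinely mutually indiscernible over $A$ and that the one-sided limit types at the cuts coincide with the limit types of those pieces — this uses that the cuts are \emph{distinct} and \emph{Dedekind}, so that between any two of them there is room inside $\II$, and that indiscernibility of the whole sequence $\II$ propagates to the required "each piece indiscernible over $A$ plus the others" condition. Once that translation is in place, the algebraic identity is just the previous corollary plus an unwinding of the two definitions of the simultaneous limit type, so the only real work is the order-theoretic setup and a Fubini argument for the product, neither of which should require NIP beyond what the cited corollary already assumes (existence of the global limit types, which holds since $T$ is NIP).
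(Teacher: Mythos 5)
Your approach is essentially the intended one: slice $\II$ into subsequences approaching each cut from the prescribed side, observe they are mutually indiscernible, match up the limit types, and invoke Lemma~\ref{lem:limmutind}. Two small clarifications are worth making.

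First, your worry that the pieces ``are not literally mutually indiscernible over $A$'' is unfounded: if $\II$ is indiscernible and $\II = \II_0 + \cdots + \II_n$, then the $\II_i$ \emph{are} mutually indiscernible (any interleaving of increasing tuples from the pieces is an increasing tuple in $\II$, so truth values agree by indiscernibility of $\II$), and this persists after passing to end segments or reversing pieces, both of which you need in order to isolate the $n$ sequences $\JJ_i$ from the $n+1$ pieces when adjacent cuts are approached from opposite sides of a common interval. Second, the Fubini direction you lead with (simultaneous limit type $\subseteq$ product) cannot by itself carry the day, because the slogan ``both are complete types'' is not free: the notation $\lim(\cc_0^\bullet,\ldots,\cc_{n-1}^\bullet)$ asserts completeness of that partial type, and this is precisely what needs justification. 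The productive inclusion is the one you call the cleaner route: by Lemma~\ref{lem:limmutind} (not the commuting corollary), any $\phi$ in the product $\lim(\JJ_0)\otimes\cdots\otimes\lim(\JJ_{n-1})$ holds on a box of end segments $\JJ_0'\times\cdots\times\JJ_{n-1}'$, and choosing the bounding indices $j_i,k_i$ so that the intervals $(j_i,k_i)\cap C_i$ are pairwise separated in $I$ (possible because the cuts are distinct and Dedekind), indiscernibility of $\II$ upgrades this to $\phi\in\limtp_U(\cc_0^\bullet,\ldots,\cc_{n-1}^\bullet)$. This shows the complete product is contained in the partial simultaneous limit type, which therefore is itself complete and equals the product; no separate Fubini argument is needed.
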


\subsection{Geometric Stability}
Throughout this subsection, we assume $T$ is stable. Let $D\subseteq U$ be a small set of parameters.

\begin{definition}
    We say a global type $p \in S_U$ is \emph{stationary over $D$} if it does not fork over $D$ and every other global extension of $p\dhr_D$ forks over $D$.
    Alternatively, if such is the case, we may simply say the local type $p \dhr_D$ is \emph{stationary}.
\end{definition}

\noindent If $q \in S_D$ is stationary, then $q\uhr^U$, as defined in Subsection \ref{ss:types and type spaces}, refers to its unique global nonforking extension.

\begin{fact}
Any type over a model is stationary.
\end{fact}

\noindent See \cite[Corollary 8.5.4]{tent:modeltheory}.

\begin{definition}
    Given $A, B \subseteq U$, we say that $A$ is \emph{forking independent} from $B$ over $D$, and write $A \dnf_D B$, if  for all finite tuples $a \in A^{<\omega}$, the type $\tp_{DB}(a)$ does not fork over $D$.
\end{definition}

For a review of forking, see Chapter 7 of \cite{tent:modeltheory}.

\begin{definition}
   We say a set of tuples $A\subseteq U^{<\omega}$ is \emph{independent over $D$} if \mbox{$a \dnf_D A\setminus a$} for all $a \in A$. 
    Otherwise, we say $A$ is \emph{dependent over $D$}.
\end{definition}

\begin{definition}\label{def:cycle}
    A finite set of tuples $A \subseteq U^{<\omega}$ is called a \emph{cycle over $D$} if it is dependent over $D$ and all its proper subsets are independent over $D$.
    Moreover, such a set is called an \emph{$n$-cycle over $D$} if its cardinality is $n$.
\end{definition}

We borrow the above terminology from the theory of matroids. See, for example, the definition of a cycle on page 133 of \cite{wilson:introductiontographtheory}.

\begin{lemma} \label{lem:stable types commute}
    If $p(x), q(y) \in S_U$ are invariant over $D$, then they commute.
\end{lemma}

\begin{proof}
If a formula $\phi(x,y,d) \in L_U$ is in $p\otimes q$ but not $q\otimes p$, then it defines a half graph on any Morley sequence $(a_i b_i : i < \omega) \models (p\otimes q)^\omega\dhr_{Dd}$.
\end{proof}

\begin{lemma}\label{lem:independent iff Morley}
    Given finitely many tuples $a_0, \dots, a_{n-1} \in U^{<\omega}$ and a small base $D \subseteq U$ such that each type $p_i = \tp_D(a_i)$ is stationary, it follows that $\bar{a}$ is independent over $D$ if and only if $\bar{a}\models \left(p_0\uhr^U \otimes \cdots \otimes p_{n-1}\uhr^U\right)\dhr_D.$
\end{lemma}
\begin{proof}
    ($\Rightarrow$): Stationarity.
    
    ($\Leftarrow$): Commutativity.
\end{proof}

\begin{lemma}(Cycle Extension)\label{lem:cycle extension}
    If $a_0,\dots, a_{n-1} \in U^{<\omega}$ form a cycle over $D$, then for every small $E\supseteq D$, there exists a cycle $\bar{b}$ over $E$ with $\bar{a} \equiv_D \bar{b}$.
\end{lemma}
\begin{proof}
    Let $p$ be a global nonforking extension of $\tp_D(\bar{a})$, and let $\bar{b} \models p\dhr_E$.
\end{proof}
 
 Let $E_0, \ldots, E_{n-1}$ be $\varnothing$-definable equivalence relations whose classes partition the $L$-sorts $X_0, \ldots, X_{n-1}$, respectively. 
 Let $Y_0, \ldots, Y_{n-1}$ be the corresponding imaginary sorts in $L^{\eq}$, and for $i<n$, let $\pi_i : X_i \to Y_i$ denote the projection taking a real tuple to the imaginary element representing its $E_i$-class.
 
\begin{lemma}\label{lem:imaginary cycles}
   Given an imaginary cycle $(b_0, \ldots, b_{n-1}) \in U^{\bar Y}$ over a small imaginary base $B \subseteq U^{\eq}$, there exists a real cycle $(a_0, \ldots, a_{n-1}) \in U^{\bar X}$ over a small real base $A \subseteq U$ such that
   \[
   \pi_0(a_0) \cdots \pi_{n-1}(a_{n-1}) \ \equivnew_B^{\eq} \ b_0 \cdots b_{n-1}.
   \]
\end{lemma}

\begin{proof}
    Choose $A \subseteq U$ such that $B \in \dcl(A)$. 
    By Lemma \ref{lem:cycle extension}, we may assume $\bar b$ is a cycle over $A$. 
    For the rest of the argument, we work in $(T^{\eq})_A$. 
    Construct $\bar a$ by induction as follows: given $a_0, \ldots, a_{i-1}$, choose $a_i$ so that $\pi_i(a_i) = b_i$ and 
    \[
    a_i \dnf_{b_i} a_0 \cdots a_{i-1} \bar b.
    \]
    Now, given $m<n$ and $i_0 < \cdots < i_{m-1} < n$, we have
    \[
    b_{i_{m-1}} \dnf_{\quad \ b_{i_0} \cdots \, b_{i_{m-2}}} a_0 \cdots a_{i_{m-2}}
    \]
    by monotonicity, transitivity, and symmetry, so we may obtain 
    \[
    b_{i_{m-1}} \dnf a_0 \cdots a_{i_{m-2}} \quad \text{and} \quad a_{i_{m-1}} \dnf a_0 \cdots a_{i_{m-2}},
    \]
    successively, using transitivity.
    Thus, every proper subset of $\bar a$ is independent.
    The fact that $\bar a$ is dependent follows from monotonicity.
\end{proof}

\section{Distality Rank}\label{s:distality rank}

Let  $\II$ be an indiscernible sequence $(b_i: i \in I) \subseteq U$ indexed by an infinite linear order $(I, <)$. Suppose $\II_0 + \cdots + \II_n$ is a partition of $\II$ corresponding to a Dedekind partition $I_0+ \cdots +I_n$ of $I$.
Let $A$ be a sequence $(a_0,\ldots,a_{n-1}) \subseteq U$. Assume $\abs{b_i}=\abs{a_j}$ for all $i\in I$ and $j<n$. 

\begin{definition}
    We say that $A$ \emph{inserts (indiscernibly) into} $\II_0+\cdots+\II_n$ if the sequence remains indiscernible after inserting each $a_i$ at the corresponding cut $\mathfrak{c}_i$, i.e., the sequence
    $$\II_0 + a_0 + \II_1 + a_1 + \cdots + \II_{n-1}+ a_{n-1}+ \II_n$$
    is indiscernible. Moreover, for any $A' \subseteq A$, we say that $A'$ \emph{inserts (indiscernibly) into} $\II_0+\cdots+\II_n$ if the sequence remains indiscernible after inserting each $a_i\in A'$ at the corresponding cut $\mathfrak{c}_i$. For simplicity, we may say that \emph{$A$ (or $A'$) inserts into $\II$} when the partition of $\II$ under consideration is clear.
\end{definition}

\begin{definition}
For $n>m>0$, we say that the Dedekind partition $\II_0+\cdots +\II_n$ is \emph{$m$-distal} if every sequence $A=(a_0, \ldots, a_{n-1}) \subseteq U$ which does not insert into $\II$ contains some $m$-element subsequence which does not insert into $\II$.
\end{definition}

\subsection{Distality Rank for EM-Types}

\begin{definition}
Given $n>m>0$, a complete EM-type $\Gamma$ is \emph{$(n,m)$-distal} if every Dedekind partition $\II_0+\cdots+\II_n$ with $\EM$-type $\Gamma$ is $m$-distal. 
\end{definition}


When considering the $(n,m)$-distality of a complete EM-type, the only interesting cases are those where $n=m+1$.

\begin{lemma} \label{lem:insert}
Fix $m>0$ and $\kappa$ a cardinal. Let $A= (a_\alpha: \alpha < \kappa) \subseteq U$, and let $\mathcal{C}=(\mathfrak{c}_\alpha :\alpha < \kappa)$ be a collection of Dedekind cuts of some infinite linear order $(I, <)$. Let $\II$ be an indiscernible sequence indexed by $I$, and let $\Gamma = \tp^{\EM}(\II)$. 
If $\Gamma$ is $(m+1,m)$-distal and all $m$-element (or smaller) subsets from $A$ insert into $\II$, each element $a_\alpha$ at the corresponding cut $\mathfrak{c}_\alpha$, then the entire sequence inserts into $\II$.
In particular, if $\Gamma$ is $(m+1,m)$-distal, then $\Gamma$ is $(n,m)$-distal for all $n>m$.
\end{lemma}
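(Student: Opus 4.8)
\emph{Proof proposal.} The plan is to reduce to a finite collection of cuts and then induct on its size, at each step peeling off the single element $a_0$, replacing $\II$ by the (still indiscernible) sequence $\JJ$ obtained by inserting $a_0$ at $\mathfrak{c}_0$, and recursing; the role of $(m+1,m)$-distality is to guarantee that the inductive hypothesis applies to $\JJ$ and, more importantly, to control the $(m+1)$-element subsets that arise. Two bookkeeping facts underlie everything. First, an indiscernible sequence containing an infinite subsequence with complete EM-type $\Gamma$ itself has complete EM-type $\Gamma$; in particular, any indiscernible insertion into $\II$ produces a sequence with EM-type $\Gamma$, hence one that is again $(m+1,m)$-distal. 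This holds because increasing $t$-tuples drawn from the subsequence decide, via indiscernibility of the larger sequence, whether it satisfies a given $\phi$ exactly as $\Gamma$ does. Second, insertion at distinct Dedekind cuts is associative, and inserting finitely many points away from a given cut $\mathfrak{c}$ leaves $\mathfrak{c}$ Dedekind and keeps distinct cuts distinct (such points cannot give $\mathfrak{c}^-$ a maximum or $\mathfrak{c}^+$ a minimum, since $\mathfrak{c}^-$, $\mathfrak{c}^+$ remain cofinal/coinitial in the respective sides).

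First I would observe that since indiscernibility has finite character and any finite increasing tuple of the fully inserted sequence involves only finitely many of the $a_\alpha$, it suffices to treat the case $\kappa = k < \omega$ (the cuts being distinct, as befits a collection of cuts): if every finite subfamily inserts, then the whole family inserts. Then I would induct on $k$. The cases $k \le m$ are trivial, since $A$ is itself an $(\le m)$-element subset and inserts by hypothesis. The case $k = m+1$ is precisely $(m+1,m)$-distality: listing the cuts as $\mathfrak{c}_0 < \cdots < \mathfrak{c}_m$ yields a Dedekind partition $\II_0 + \cdots + \II_{m+1}$, which is $m$-distal by hypothesis, so — since no $m$-element subset of $A$ fails to insert — $A$ inserts.

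For the inductive step, with $k \ge m+2$, I would set $\JJ$ to be $\II$ with $a_0$ inserted at $\mathfrak{c}_0$ (legitimate, as the singleton $\{a_0\}$ inserts); by the first fact $\JJ$ is indiscernible with $\tp^{\EM}(\JJ) = \Gamma$. The key claim is that every $m$-element subset $S$ of $(a_1,\dots,a_{k-1})$ inserts into $\JJ$ at the cuts extended from $\II$: by associativity this is equivalent to $\{a_0\}\cup S$ inserting into $\II$, and since every $m$-element subset of the $(m+1)$-element set $\{a_0\}\cup S$ is an $m$-element subset of $A$ (hence inserts into $\II$), the already-settled case $k = m+1$ delivers the claim. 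As $k-1 \ge m$, the smaller subsets of $(a_1,\dots,a_{k-1})$ insert into $\JJ$ too (delete points from an $m$-element insertion), so the inductive hypothesis applies to $\JJ$, the extended cuts $\mathfrak{c}_1,\dots,\mathfrak{c}_{k-1}$, and $(a_1,\dots,a_{k-1})$, giving that this tuple inserts into $\JJ$. Unravelling the associativity, $A$ inserts into $\II$. Finally the ``in particular'' clause is immediate: for a Dedekind partition $\II_0 + \cdots + \II_n$ with $n > m$, the finite case with $\kappa = n$ says that if every $m$-element subset of $A$ inserts then $A$ inserts — which is exactly the $m$-distality of that partition, so $\Gamma$ is $(n,m)$-distal.

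The main obstacle, I expect, is not the induction but making the two preliminary facts precise: verifying that Dedekind cuts survive insertion of finitely many points elsewhere (so that one may iterate) and that a single indiscernible insertion does not change the complete EM-type. Everything else is a short induction once the decisive move is identified — peel off one element, and absorb the interaction of the remaining elements with it into the $(m+1)$-element subsets $\{a_0\}\cup S$, where $(m+1,m)$-distality of $\II$ can be applied.
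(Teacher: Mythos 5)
Your proof is correct and follows essentially the same approach as the paper: peel off one element at a time, observe that the inserted sequence retains the EM-type $\Gamma$ (hence remains $(m+1,m)$-distal), and recurse. The paper phrases this as a transfinite induction on an ordinal $\beta \leq \kappa$, with the limit step playing the role of your finite-character reduction, and carries the slightly stronger invariant that every $(m+1)$-element (or smaller) subset of the tail $A_{\geq\beta}$ inserts into $\II \cup A_{<\beta}$; these are only cosmetic differences.
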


\begin{proof}
Suppose $\Gamma$ is $(m+1,m)$-distal and all $m$-element (or smaller) subsets from $A$ insert into $\II$. 
Let $P(\beta)$ assert that any ($m+1$)-element (or smaller) subset from the tail $A_{\geq \beta}=(a_\alpha: \beta \leq \alpha < \kappa)$ inserts into $\II \cup A_{<\beta}$ (i.e., the sequence created by inserting each element of $A_{<\beta}$ at its corresponding cut). 
We proceed by induction on $\beta$.
\bigskip

\begin{description}[style=multiline,leftmargin=1.5cm,font=\normalfont]
	\item[\underline{$\beta=0$}]  $P(0)$ holds since $\II \models^{\EM} \Gamma$  and $\Gamma$ is $(m+1,m)$-distal.
\bigskip
	\item[\underline{$\beta+1$}] Let $\JJ= \II\cup A_{<\beta+1}$.  $P(\beta)$ asserts that $\JJ$ is indiscernible and any $m$-element (or smaller) subset from the tail $A_{\geq \beta +1}$ inserts into $\JJ$. Thus, $P(\beta+1)$ holds since $\JJ \models^{\EM} \Gamma$  and $\Gamma$ is $(m+1,m)$-distal.
\bigskip
	\item[\underline{$\beta$ limit}]  Let $\JJ= \II\cup A_{<\beta}$.  Assume we have $\phi(\bar{c}) \nleftrightarrow \phi(\bar{d})$ witnessing that $P(\beta)$ does not hold, where $\bar{c}$ and $ \bar{d}$ have the same relative order in $\JJ \cup A'$ and $A'$ is an ($m+1$)-element (or smaller) subset of $A_{\geq \beta}$. Let $\beta'$ be the smallest ordinal such that $\bar{c}, \bar{d} \in \II\cup A_{<\beta'}\cup A'$. Now we have $\beta' < \beta$ and $\neg P(\beta')$.
\end{description}
\end{proof}

\begin{definition}
Given $m>0$, a complete EM-type is \emph{$m$-distal} if it is $(m+1,m)$-distal.
\end{definition}

\noindent Notice that for any $n>m>0$, if a complete EM-type is $m$-distal, then it is also $n$-distal.

\begin{definition}\label{def:drGamma}
The \emph{distality rank} of a complete EM-type $\Gamma$, written $\DR(\Gamma)$, is the least $m \geq 1$ such that $\Gamma$ is $m$-distal. If no such finite $m$ exists, we say the distality rank of $\Gamma$ is $\omega$.
\end{definition}

It is interesting to note that, given the generality of Lemma \ref{lem:insert}, it would make sense to define ($\beta,\alpha$)-distal and $\alpha$-distal for arbitrary, not only finite, ordinals $\beta>\alpha>0$. We could then define the distality rank of a complete EM-type as the least ordinal $\alpha$ for which it is $\alpha$-distal. However,  since any failure of a sequence to be indiscernible is witnessed by finitely many elements from that sequence, this yields only one infinite distality rank, namely $\omega$. Thus, the resulting definition of distality rank would be equivalent to Definition \ref{def:drGamma}.

\begin{definition}
	Fix $n> 0$, and let
	\[
	I_0 = \omega, \quad I_1 = \omega^* + \omega, \quad \ldots, \quad I_{n-1} = \omega^* + \omega, \quad I_n = \omega^*
	\]
	where $\omega^*$ is $\omega$ in reverse order. If $\II\subseteq U$ is a sequence indexed by $I = I_0 + \cdots + I_n$, we call the corresponding partition $\II_0+\cdots + \II_n$ an \emph{$n$-skeleton}. 
\end{definition}

\noindent Notice that an $n$-skeleton is a Dedekind partition with $n$ cuts.
\begin{prop} \label{prop:skeleton}
	Given $m>0$, a complete $\EM$-type $\Gamma$ is \emph{$m$-distal} if and only if there is an $(m+1)$-skeleton $\II_0+\cdots +\II_{m+1}\models^{\EM} \Gamma$ which is $m$-distal. 
\end{prop}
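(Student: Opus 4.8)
The plan is to prove both directions. The forward direction ($\Rightarrow$) is immediate from the definition: if $\Gamma$ is $m$-distal, then by definition it is $(m+1,m)$-distal, so \emph{every} sequence realizing $\Gamma$ — in particular any $(m+1)$-skeleton realizing $\Gamma$, and such a skeleton exists since $\Gamma$ is a consistent EM-type and $I = \omega + (\omega^*+\omega) + \cdots + (\omega^*+\omega) + \omega^*$ is a linear order — has every Dedekind partition into $m+2$ pieces being $m$-distal. Since the $(m+1)$-skeleton partition is itself a Dedekind partition with $m+1$ cuts, it is $m$-distal.

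The substantive direction is ($\Leftarrow$). Suppose there is an $(m+1)$-skeleton $\II_0 + \cdots + \II_{m+1} \models^{\EM} \Gamma$ which is $m$-distal; we must show $\Gamma$ is $(m+1,m)$-distal, i.e., that for \emph{every} sequence $\JJ \models^{\EM} \Gamma$ and every Dedekind partition $\JJ_0 + \cdots + \JJ_{m+1}$, that partition is $m$-distal. The key observation is that $m$-distality of a Dedekind partition $\JJ_0+\cdots+\JJ_{m+1}$ depends, by the Proposition in Subsection 3.1, only on whether $\limtp_{\JJ}(\cc_0,\ldots,\cc_m)$ is $m$-determined, and this limit type is computed from finitely many formulas at a time, each evaluated on finitely many elements drawn from near the cuts. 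So the plan is: given a candidate failure of $m$-distality for some $\JJ \models^{\EM}\Gamma$ — witnessed by a tuple $A = (a_0,\ldots,a_m)$ that does not insert into $\JJ$ while every $m$-element subset does — I would pull this configuration back to the skeleton. Concretely, since all the relevant behavior is captured by finitely many formulas, by compactness and the fact that $\EM(\JJ) = \EM(\II) = \Gamma$, I can find finite subsequences near each cut $\cc_i$ of $\JJ$ whose type (together with the corresponding $a_i$'s) is realized by finite subsequences near the cuts of the skeleton $\II_0+\cdots+\II_{m+1}$; stretch these to full Dedekind cuts inside the skeleton (the skeleton's cuts are already Dedekind, being of the form $\omega^*+\omega$ meeting $\omega$ or $\omega^*$). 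This transfers the failure of insertion — and the success of insertion of all $m$-subsets — into the skeleton, contradicting its $m$-distality.

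More carefully, I would argue at the level of limit types: $\limtp_{\JJ}(\cc_0,\ldots,\cc_m)$ failing to be $m$-determined means there is a formula $\phi(x_0,\ldots,x_m)$ and $m$-types $q_\sigma \subseteq \limtp_\JJ(\cc_{\sigma(0)},\ldots,\cc_{\sigma(m-1)})$ for increasing $\sigma\colon m \to m+1$ such that $\bigcup_\sigma q_\sigma \cup \{\neg\phi\}$ (or $\cup\{\phi\}$, depending on which side $\limtp_\JJ$ falls) is consistent — but actually it is the limit type itself that decides $\phi$, so the statement "$\phi \in \limtp_\JJ(\cc_0,\ldots,\cc_m)$ but this is not forced by the $m$-dimensional limit types" is witnessed by a single formula $\phi$ and finitely many formulas generating approximations to the $q_\sigma$. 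Each such approximation is a statement about $\Gamma$ evaluated on a configuration of finitely many indices lying in a fixed order-pattern relative to the $m+1$ cuts. Since the skeleton's index set contains, cofinally and coinitially around each of its $m+1$ cuts, copies of $\omega$ and $\omega^*$, any such finite order-pattern around the cuts of $\JJ$ embeds into the skeleton's index set respecting the cuts. Realizing the $a_i$ accordingly (using $\omega$-homogeneity of $\UU$ and that $\EM(\JJ\cup\{a_i\text{'s one at a time}\})$ data transfers since it is a statement in $\Gamma$ plus the insertion conditions), I obtain in the skeleton a tuple $A'$ of which every $m$-subset inserts but $A'$ itself does not — contradicting the assumed $m$-distality of the skeleton.

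The main obstacle I anticipate is the bookkeeping in the transfer argument: making precise that "every finite order-configuration around $m+1$ Dedekind cuts is realized around the $m+1$ cuts of an $(m+1)$-skeleton" and that the insertion/non-insertion data (which is about indiscernibility of augmented sequences, hence about formulas with parameters among the $a_i$) genuinely reduces to a property of $\Gamma$ together with the types of the $a_i$ over finite subsequences — so that it can be moved between two sequences sharing the same EM-type. Once one sets up the right finite approximation (a formula $\phi$ plus the finitely many indices witnessing that the $m$-subsets insert and the full tuple does not, all within a bounded distance of the cuts), the embedding into the skeleton and the appeal to $\omega$-homogeneity of $\UU$ to produce $A'$ is routine, and the contradiction with the skeleton's $m$-distality closes the proof.
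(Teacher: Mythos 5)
Your overall idea---transfer a failure of $m$-distality from an arbitrary Dedekind partition $\JJ$ realizing $\Gamma$ to the given $(m+1)$-skeleton $\II$---is the same as the paper's, but you chose the harder direction of transfer and so need a compactness argument that the paper avoids. The paper's trick is to embed the \emph{entire} skeleton $\II$ into $\JJ$ at once: because $\II_0\cong\omega$, each middle $\II_i\cong\omega^*+\omega$, and $\II_{m+1}\cong\omega^*$, while each cut of $\JJ$ is Dedekind, one can build a single order-preserving injection $\sigma\colon\II\to\JJ$ with $\bar b_i\subseteq\sigma(\II_i)\subseteq\JJ_i$. Since $\sigma(\II)$ and $\II$ are subsequences with the same order type, both realizing the complete EM-type $\Gamma$, the map $\sigma$ is elementary and extends to an automorphism $\hat\sigma$ of $\UU$. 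Setting $A'=\hat\sigma^{-1}(A)$, the insertion of every $m$-subset of $A$ into $\JJ$ restricts to the subsequence $\sigma(\II)$ and pulls back under $\hat\sigma^{-1}$ to insertion of $m$-subsets of $A'$ into $\II$; and $\UU\not\models\phi(\hat\sigma^{-1}(\bar b_0),a'_0,\ldots,a'_m,\hat\sigma^{-1}(\bar b_{m+1}))$ shows $A'$ does not insert. No compactness, no limit types. Your route---embedding finite order-patterns of $\JJ$ into $\II$, realizing finite approximations, and appealing to compactness to patch them into a single $A'$---can be made to work, but it is more delicate: the ``positive'' data that every $m$-subset inserts is an \emph{infinite} scheme of formulas over $\II$, and your remark that the relevant behavior is ``witnessed by a single formula $\phi$ and finitely many formulas generating approximations to the $q_\sigma$'' understates this (the $q_\sigma$ are full $m$-types over $\JJ$, not finitely generated). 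If you do go that way, the clean formulation is: define the partial type over $\II$ asserting (a) every $m$-subset of $(x_0,\ldots,x_m)$ inserts indiscernibly at the cuts, and (b) $\neg\phi(\bar d_0,x_0,\ldots,x_m,\bar d_{m+1})$ for fixed increasing $\bar d_i\subseteq\II_i$; then show it is finitely satisfiable by the finite embedding into $\JJ$ followed by an automorphism. That works, but recognizing that $\II$ itself embeds into $\JJ$ collapses the whole thing to one application of strong homogeneity. Also, the detour through $m$-determinacy of limit types, while true, adds machinery the argument does not need.
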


\begin{proof}
	($\Rightarrow$): The Standard Lemma \cite[Lemma 7.1.1]{tent:modeltheory} asserts that $\II \EMM \Gamma$ of the appropriate order type exists.
	
	($\Leftarrow$):  Suppose $\Gamma$ is not $m$-distal.  Let $\II=\II_0+\cdots + \II_{m+1} \models^{\EM} \Gamma$ be an $(m+1)$-skeleton. We will show that the skeleton is not $m$-distal. 
	
	Since $\Gamma$ is not $m$-distal, there exist $\JJ \models^{\EM} \Gamma$, a Dedekind partition $\JJ=\JJ_0 + \cdots + \JJ_{m+1}$, and a sequence $A = (a_0, \ldots, a_m) \in U$ such that all $m$-sized subsets insert but $A$ does not. Let $\phi \in \Gamma$ and $\bar{b}_i \in \JJ_i$ such that
	\[
	\mathcal{U} \not \models \phi(\bar{b}_0, a_0, \ldots, \bar{b}_m, a_m, \bar{b}_{m+1}).
	\]
	Construct $\sigma: \II \rightarrow \JJ$ an order-preserving map such that 
	\[
	\bar{b}_i \subseteq \sigma(\II_i) \subseteq \JJ_i.
	\]
	We can extend $\sigma$ to an automorphism of $\mathcal{U}$. Let 
	\[
	A' = (\sigma^{-1}(a_0), \ldots, \sigma^{-1}(a_m)).
	\] 
	Now any $m$-sized subset of $A'$ inserts into $\II_0 + \cdots + \II_{m+1}$, but $A'$ does not.
\end{proof}
\begin{definition} \label{def:witness}
	We say $(\phi,A,B)$ is a \emph{witness} that an indiscernible Dedekind partition $\II_0 + \cdots + \II_{m+1}$ is not $m$-distal if, as in the previous proof, the following hold:
	
	\begin{itemize}
		\item $\phi\in \tp^{\EM} (\II_0+\cdots + \II_{m+1}),$
		\item $A=(a_0,\ldots,a_m)\subseteq U$ is a sequence such that any proper subsequence inserts into the partition,
		\item $B=(\bar{b}_0, \ldots , \bar{b}_{m+1} ) $ where each $\bar{b}_i$ is a finite increasing sequence in $\II_i$, and
		\item $\mathcal{U} \not \models \phi(\bar{b}_0, a_0, \ldots, \bar{b}_m, a_m, \bar{b}_{m+1})$.
	\end{itemize}

\end{definition}
\begin{prop}\label{prop:dense}
	Let $m>0$ and $\Gamma \in S^{\EM}$. Suppose $\II = \II_0 + \cdots + \II_{m+1} \models^{\EM} \Gamma$ is a Dedekind partition whose underlying index is dense with no endpoints. If $\II$ is not $m$-distal, then every Dedekind partition $\JJ_0 + \cdots + \JJ_{m+1} \models^{\EM} \Gamma$ is not $m$-distal.
\end{prop}
\begin{proof}
	Let $\JJ = \JJ_0 + \cdots + \JJ_{m+1} \models^{\EM} \Gamma$ be a Dedekind partition. Suppose $\II$ is not $m$-distal. Without loss of generality, we may assume that $\II \subseteq U^t$ for some $t < \omega$. Let $(\phi, A, B)$ witness that $\II$ is not $m$-distal, and let $\sigma:B \to \JJ$ be an order-preserving map such that $\sigma(B \cap \II_i) \subseteq \JJ_i$ for each $i \leq m+1$.
	Given a finite $\JJ' \subseteq \JJ$, there exists an order preserving map $\tau : \JJ' \to \II$ such that $\tau \circ \sigma (B) = B$ and $\tau(\JJ' \cap \JJ_i) \subseteq \II_i$ for each $i \leq m+1$.
	Since any such $\tau$ extends to an automorphism of $\UU$, by compactness, there exists $A'$ such that $(\phi, A', \sigma(B))$ witnesses that $\JJ$ is not $m$-distal.
\end{proof}

We would like the distality rank of an indiscernible Dedekind partition to depend only on its EM-type. Unfortunately, Propositions \ref{prop:skeleton} and \ref{prop:dense} are not strong enough to preclude the existence of an EM-type whose densely ordered realizations are $m$-distal but whose discretely ordered realizations are not. This pathology can be eliminated in the NIP context (Theorem \ref{thm:order type does not matter for NIP}); however, before we can show this, we need a very technical but essential lemma which will be used to prove several theorems in this paper.

\begin{lemma}[Base Change Lemma]\label{lem:strongbasechange}
	Suppose $T$ is NIP and $m > 0$. If
	\begin{itemize}
		\item $\II = \II_0 + \cdots + \II_{m+1}$ is a Dedekind partition, 
		\item $A = (a_0, \ldots, a_m)$ is a sequence such that every proper subsequence inserts into $\II$, and
		\item $D \subseteq U$ is a small set of parameters,
	\end{itemize}
	then there is a sequence $A' = (a_0', \ldots, a_m')$ such that $A' \equiv_{\II} A$ and
	\[
		a'_{\sigma(0)} \cdots a'_{\sigma(m-1)} \models \limtp_D \left( \mathfrak{c}^-_{\sigma(0)}, \ldots, \mathfrak{c}^-_{\sigma(m-1)}\right)
	\]
	for each $\sigma: m \rightarrow m+1$ increasing.
	
\end{lemma}

\begin{proof}
	Assume no such $A'$ exists. By compactness, there are $\phi \in \tp_{\II}(a_0, \ldots, a_m)$ and $\psi_\sigma \in \limtp_D(\cc^-_{\sigma(0)}, \ldots, \cc^-_{\sigma(m-1)})$ for each $\sigma: m \rightarrow m+1$ increasing such that
	\begin{equation}
		 \phi(x_0, \ldots, x_m) \vdash \bigvee_\sigma \neg \psi_\sigma(x_{\sigma(0)}, \ldots, x_{\sigma(m-1)}). \label{eq:phipsi} \tag{$*$}
	\end{equation}
	First we handle the case where $\II$ is dense. Let $B\subseteq \II$ be the parameters of $\phi$.
	For each $\sigma$ as above, we construct an indiscernible sequence $\JJ_\sigma$ by induction: 
	\bigskip
	\begin{description}[style=multiline,leftmargin=2.5cm,font=\normalfont]
		\item[\underline{Stage 0}] For all $j < m + 1$, choose $\II_j^0$ to be a proper end segment of $\II_j$ excluding $B$ such that each $\psi_\sigma$ is satisfied by every element of $\II_{\sigma(0)}^0 \times \cdots \times \II_{\sigma(m-1)}^0$. Let $\II^0 = \II$, and let $\JJ^0_\sigma = \emptyset$ for each $\sigma$. 
		\bigskip
		\item[\underline{Stage $2i+1$}] Let $\II'$ be a finite subset of $\II^{2i}$ containing $B$. There is an increasing map 
		\[
			\II' \, \longrightarrow \,\, \II \setminus \bigcup_j \II_j^{2i}
		\] 
		fixing $B$ such that for each $j < m+1$, elements to the left of $\II_j^{2i}$ remain to the left and all other elements map to the right of $\II_j^{2i}$. This map extends to an automorphism fixing $B$, so by compactness, there is $A' = (a_0', \ldots, a'_m)$ realizing $\phi$ such that if we assign each $a'_j$ to the cut of $\II^{2i}$ immediately to the left of $\II^{2i}_j$, then any proper subsequence of $A'$ inserts into $\II^{2i} \supseteq \II$. By (\ref{eq:phipsi}), we can now choose $\sigma_i: m \rightarrow m+1$ increasing so that
		\[
			a'_{\sigma_i(0)} \cdots a'_{\sigma_i(m-1)} \not \models \psi_{\sigma_i}.
		\]
		Let 
		\[
			\II^{2i+1} = \II^{2i} \cup \left\{a'_{\sigma_i(j)} \, : \, j < m \right\}
		\] 
		where each $a'_{\sigma_i(j)}$ is inserted immediately to the left of $\II^{2i}_{\sigma_i(j)}$. Let 
		\[
			\JJ^{2i+1}_{\sigma_i} = \JJ^{2i}_{\sigma_i} + \left(a'_{\sigma_i(0)}, \ldots, a'_{\sigma_i(m-1)} \right).
		\] 
		For each $j < m+1$, let $\II^{2i+1}_j = \II^{2i}_j$.
		\bigskip
		\item[\underline{Stage $2i + 2$}] For each $j < m+1$, choose $b_j \in \II^{2i+1}_j $ and an end segment $\II^{2i+2}_j$ of $\II^{2i+1}_j$ excluding $b_j$. Let $\II^{2i+2} = \II^{2i+1}$, and for each $\sigma$, let 
		\[
			\JJ^{2i+2}_\sigma = \JJ_\sigma^{2i+1} + \left(b_{\sigma(0)}, \ldots, b_{\sigma(m-1)} \right).
		\]
	\end{description}
	\bigskip
	For each $\sigma$, let $\JJ_\sigma = \bigcup_{i < \omega} \JJ_\sigma^i$. Choose a $\sigma$ which appears infinitely many times in $(\sigma_i \, : \, i < \omega)$. It follows that $\psi_\sigma$ alternates infinitely many times on $\JJ_\sigma$, contradicting NIP.
	
	In the case where $\II$ is not dense, we may no longer assume the above construction can continue ad infinitum; however, finitely many stages will suffice. For $i<\omega$, notice that
	\[ \sum_\sigma \alt\left(\psi_\sigma, \JJ^i_\sigma\right) =i-1; \]
	thus, we only need to complete the construction through stage $2n+2$ where
	\[ n\geq \frac{\sum_\sigma \alt(\psi_\sigma)}{2} \] 
	to reach a contradiction.
	For $i<\omega $, let
	\[ s(i)=0+1+\cdots +i =\frac{i^2+i}{2}. \]
	Make the following modifications to Stage 0 and Stage $2i+1$.
	\bigskip
	\begin{description}[style=multiline,leftmargin=2.5cm,font=\normalfont]
		\item[\underline{Stage 0}] For each $j<m+1$, choose an increasing sequence
		\[ E_j=\left( e_j^0, \ldots, e_j^{s(n)-1} \right) \subseteq \II_j \]
		and a proper end segment $\II_j^0$ of $\II_j$ such that 
		\[ B\cap \II_j \, < \, E_j \, < \, \II_j^0 \] 
		and such that each $\psi_\sigma$ is satisfied by... (continue as above)
		\bigskip
		\item[\underline{Stage $2i+1$}] Let $\II'$ be a finite subset of 
		\[ \II^{2i}\setminus \left\{e_j^0, \ldots, e_j^{s(i)-1} : j<m+1\right\} \]
		containing $B$. There is an increasing map \[\II'\rightarrow \II\setminus \bigcup_j \left( \II_j^{2i} \cup \left\{ e_j^0, \ldots, e_j^{s(i-1)-1} \right\} \right) \]
		fixing $B$ such that... (continue as above)
	\end{description}
\end{proof}

Now we can show that, in an NIP context, the $m$-distality of an indiscernible Dedekind partition depends only on its EM-type.

\begin{theorem} \label{thm:order type does not matter for NIP}
	Suppose $T$ is NIP. Given $m>0$, a complete EM-type $\Gamma$ is $m$-distal if and only if there is a Dedekind partition $\II_0 + \cdots + \II_{m+1} \models^{\EM} \Gamma$ which is $m$-distal.
\end{theorem}
\begin{proof}
	($\Leftarrow$): Suppose $\Gamma \in S^{\EM}$ is not $m$-distal. Let $J = (m+2)\times \mathbb{Q}$ be lexicographically ordered, and let $J_i = \{i\}\times \mathbb{Q}$ for $i \leq m+1$.
	By the Standard Lemma (\cite[Lemma 7.1.1]{tent:modeltheory}), there exists $\JJ = (b_j : j \in J) \models^{\EM} \Gamma$.
	Let $K = K_0 + \cdots + K_{m+1}$ with $K_0 = \{0\}\times \mathbb{Z}^{\geq0}$, $K_{m+1} = \{m+1\}\times \mathbb{Z}^{\leq0}$, and $K_i = \{i\} \times \mathbb{Z}$ for $0<i<m+1$.
	Let $\KK = (b_k : k \in K)$. Since $\KK$ is a skeleton, there is $(\phi, A,B)$ witnessing that $\KK$ is not $m$-distal, and by the Base Change Lemma (Lemma \ref{lem:strongbasechange}), there is $A' \equiv_\KK A$ such that
	\[ a'_{\sigma(0)} \cdots a'_{\sigma(m-1)} \models \limtp_\JJ (\cc^-_{\sigma(0)}, \dots, \cc^-_{\sigma(m-1)}).\]
	It follows that $(\phi, A', B)$ witnesses that $\JJ$ is not $m$-distal, so we can apply Proposition \ref{prop:dense} to conclude that no Dedekind partition $\II_0 + \cdots + \II_{m+1} \models^{\EM} \Gamma$ is $m$-distal.
\end{proof}


\subsection{Distality Rank for Theories}

\begin{definition}
Given $m > 0$, a theory $T$, not necessarily complete, is \emph{$m$-distal} if for all completions of $T$ and all tuple sizes $\kappa$, every $\Gamma \in S^{\EM}(\kappa \cdot \omega)$ is $m$-distal.
\end{definition}

\noindent In the existing literature, an NIP theory is called distal if and only if it is 1-distal (see \cite[Definition 2.1]{simon:distalandnondistal} and following).

\begin{definition}\label{drT}
	The \emph{distality rank} of a theory $T$, written $\DR(T)$, is the least $m \geq 1$ such that $T$ is $m$-distal. If no such $m$ exists, we say the distality rank of $T$ is $\omega$.
\end{definition}

Adding named parameters to a theory does not increase its distality rank.
\begin{prop} \label{prop:DRA}
If $T$ is a complete theory and $B\subseteq U$ is a small set of parameters, then $\DR (T_B) \leq \DR (T)$.
\end{prop}

\begin{proof}
Let $\II= \left( a_i:i\in I \right) \subseteq U$ be a sequence of tuples, and let $(b_\alpha :\alpha <\kappa)$ be an enumeration of the base $B$.
Notice that $\II$ is indiscernible in $T_B$ if and only if the sequence $$(a_i+(b_\alpha: \alpha <\kappa):i\in I)$$ is indiscernible in $T$.
Thus, given $m>0$, if $T$ is $m$-distal, then $T_B$ is also $m$-distal.
\end{proof}

In an NIP context, the distality rank of a theory is completely unaffected by base changes.

\begin{theorem} [Base Change Theorem] \label{thm:basechange}
	If $T$ is NIP and $B\subseteq U$ is a small set of parameters, then $\DR(T_B) = \DR(T)$.
	
\end{theorem}

\begin{proof}
Proposition \ref{prop:DRA} asserts that $\DR(T_B)\leq \DR(T)$; thus, it suffices to show that for $m>0$, if $T_B$ is $m$-distal, then $T$ is also $m$-distal. 

Suppose $\Gamma \in S^{\EM}$ is not $m$-distal. By the Standard Lemma \cite[Lemma 7.1.1]{tent:modeltheory}, there is a skeleton $\II_0 + \cdots + \II_{m+1} \models^{\EM} \Gamma$ which is indiscernible over $B$. Furthermore, Proposition \ref{prop:skeleton} asserts that this skeleton is not $m$-distal; thus, there exists a sequence $A = (a_0, \ldots, a_m)$ such that every proper subsequence inserts indiscernibly over $\varnothing$ but $A$ does not. Applying the Base Change Lemma (Lemma \ref{lem:strongbasechange}) with $D = B \cup \II$ yields a sequence $A'$ such that every proper subsequence inserts indiscernibly over $B$ but $A'$ does not.
\end{proof}

We conclude this section with the easy observation that 1-distal theories are unstable.

\begin{prop}\label{prop:stablenotdistal}
	If $T$ is stable, then $\DR(T)\geq 2$.
\end{prop}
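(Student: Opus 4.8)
The plan is to show that if $T$ is stable, then some complete EM-type fails to be $1$-distal, which gives $\DR(T) \geq 2$. The witness to instability is the classical order property: since $T$ is stable, it is in particular unable to avoid... wait, stability is the \emph{absence} of the order property. Let me reconsider. We want a \emph{stable} theory to fail $1$-distality; the obstruction to distality should come precisely from stable (generically stable) behavior. The natural route is to exhibit an indiscernible sequence $\II = \II_0 + \II_1$ with a single Dedekind cut $\cc_0$, together with two elements $a_0, a_1$, each of which inserts at $\cc_0$ individually, but such that $(a_0, a_1)$ does not insert. In other words, $\limtp_\II(\cc_0, \cc_0)$ — or rather the two-variable limit type at the cut — is not $1$-determined. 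The cleanest source of such behavior is a totally indiscernible sequence: take $\II$ to be the Morley sequence of a nonalgebraic type (available because $T$ has infinite models), which by stability is totally indiscernible, and let $b$ realize its limit type at the cut. Then $b$ inserts at $\cc_0$. Now take $a_0 = a_1 = b$: inserting $b$ twice at the same cut produces a sequence with a repeated element, which is \emph{not} indiscernible (an indiscernible sequence with a repetition is constant, but $\II$ is non-constant since the type is nonalgebraic). Each singleton $\{a_i\}$ inserts by construction, but the pair does not — so $\II_0 + \II_1$ is not $1$-distal, hence $\DR(\tp^{\EM}(\II)) \geq 2$, hence $\DR(T) \geq 2$.

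Concretely, here are the steps in order. First, fix $p \in S(x)$ a nonalgebraic $1$-type (exists since models are infinite), and build $\II = (b_i : i \in I)$ a Morley sequence of a global nonforking extension of $p$, indexed by a dense order with a Dedekind cut $\cc_0 = I_0 + I_1$; since $T$ is stable, $\II$ is totally indiscernible (indeed, indiscernible as a set). Second, let $b$ realize the limit type $\limtp_\II(\cc_0)$ — equivalently, $\II_0 + b + \II_1$ is indiscernible; this is where I use that for a Morley sequence the average type is well-defined and inserting it at any cut preserves indiscernibility. Third, set $a_0 = a_1 = b$ and observe that $\{a_0\}$ and $\{a_1\}$ each insert into $\II_0 + \II_1$ at $\cc_0$ by the second step. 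Fourth, observe that $\II_0 + a_0 + a_1 + \II_1$ contains two equal consecutive terms $a_0 = a_1 = b$; if this were indiscernible, then by indiscernibility every pair of terms would satisfy the same formulas, forcing all terms equal, contradicting that $\II$ is non-constant (as $p$ is nonalgebraic). Hence $A = (a_0, a_1)$ does not insert, while every $1$-element subset does, so with $n = 2$, $m = 1$ the partition $\II_0 + \II_1$ violates $1$-distality. Therefore $\Gamma = \tp^{\EM}(\II)$ is not $1$-distal, so $\DR(\Gamma) \geq 2$ and thus $\DR(T) \geq 2$.

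The main obstacle I anticipate is making precise, and correctly invoking, the fact that a Morley sequence in a stable theory is totally indiscernible and that its limit type inserts at \emph{every} cut — including the fact that one can insert a single element at a cut of a two-element-index partition and still have indiscernibility, which is exactly what "$\{a_i\}$ inserts" demands. One should cite the relevant standard facts (total indiscernibility of indiscernible sets in stable theories, and that the limit/average type of a totally indiscernible sequence can be inserted at any cut); these are either available from the preliminaries or are routine consequences of stability. A secondary subtlety is the degenerate-looking choice $a_0 = a_1$: one must confirm the definition of "inserts" genuinely allows repeated elements among $A$ (it does — $A$ is just a sequence in $U$, with no distinctness requirement), and that a non-constant indiscernible sequence cannot have two equal consecutive entries, which follows immediately from indiscernibility applied to a formula separating two distinct original entries $b_i \neq b_j$.
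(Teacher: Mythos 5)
Your core idea — that total indiscernibility in a stable theory lets you insert the same element twice and that the resulting repetition kills indiscernibility of a nonconstant sequence — is exactly the idea used in the paper's own proof. However, your setup does not match the definitions in the paper. The definition of an $m$-distal Dedekind partition $\II_0 + \cdots + \II_n$ requires $n > m$, and the sequence $A = (a_0, \ldots, a_{n-1})$ has one entry per cut: $a_i$ is inserted at $\cc_i = \cut(I_i, I_{i+1})$. Likewise, $1$-distality of a complete EM-type is defined as $(2,1)$-distality, i.e., a condition on partitions with two cuts. Your partition $\II_0 + \II_1$ has only one cut $\cc_0$, so there is no room for a two-element sequence $A = (a_0, a_1)$; the phrase ``$\II_0 + a_0 + a_1 + \II_1$'' inserting both elements at the same cut is not an instance of the definition, and ``the partition $\II_0 + \II_1$ violates $1$-distality'' does not type-check. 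Consequently the final step ``therefore $\Gamma$ is not $1$-distal'' does not follow from what you have shown.

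The fix is small and is precisely what the paper does: take a nonconstant indiscernible $2$-skeleton $\II_0 + \II_1 + \II_2$ with cuts $\cc_0, \cc_1$, find $a$ which inserts at $\cc_0$, and use total indiscernibility (your step which correctly invokes stability) to conclude $a$ also inserts at $\cc_1$. Then $A = (a, a)$ has both singletons inserting at their respective cuts, but the full insertion yields $\II_0 + a + \II_1 + a + \II_2$ which fails $x_0 \neq x_1 \in \tp^{\EM}(\II)$. Your observations about nonalgebraicity (to ensure nonconstancy) and about repeated entries violating indiscernibility are both correct and are exactly what is needed once the partition has the right shape.
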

\begin{proof}
	Let $\II = \II_0 + \II_1 + \II_2$ be a nonconstant indiscernible skeleton. There is $a\in U$ which inserts at $\cc_0$. Since $T$ is stable, $\II$ is totally indiscernible, so $a$ also inserts at $\cc_1$. It follows that $(x_0\neq x_1, (a,a), \varnothing )$ witnesses that the skeleton is not 1-distal.
\end{proof}

\section{Strong Distality Rank}\label{s:strong distality}
\begin{definition}
	Given $m>0$, an indiscernible Dedekind partition $\II_0 + \II_1$ is \emph{strongly $m$-distal} if for all $a\in U$ and all sequences of small sets $\bar{D}=(D_0, \ldots, D_{m-1})$ such that $\II_0+\II_1$ is indiscernible over $\bar{D}$ and $\II_0+a+\II_1$ is indiscernible over $\bigcup_{i \neq j}D_i$ for all $j<m$, the sequence $\II_0+a+\II_1$ is indiscernible over $\bar{D}$.
\end{definition}

\begin{lemma}\label{lem:strongly nm distal}
	Let $n \geq m>0$, and let $\II_0 + \II_1$ be a strongly $m$-distal Dedekind partition. Given $a\in U$ and a sequence of small sets $\bar{D}=(D_0, \ldots, D_{n-1})$, if $\II_0+\II_1$ is indiscernible over $\bar{D}$ and $\II_0+a+\II_1$ is indiscernible over $D_{i_0} \cdots D_{i_{m-2}}$ for all $i_0 < \cdots < i_{m-2} < n$, then $\II_0+a+\II_1$ is indiscernible over $\bar{D}$.
\end{lemma}

\begin{proof}
    We proceed by induction. Suppose the result holds for some $n\geq m$. Given $a \in U$ and a sequence of small sets $(D_0, \ldots, D_n)$, if $\II$ is indiscernible over $\bar{D}$ and $\II_0 + a + \II_1$ is indiscernible over $D_{i_0} \cdots D_{i_{m-2}}$ for all $i_0 < \cdots < i_{m-2} < n+1$, then $\II_0+a+\II_1$ is indiscernible over $D_{j_0} \cdots D_{j_{m-2}}D_n$ for all $j_0 < \cdots < j_{m-2} < n$ since $\II_0 + \II_1$ is strongly $m$-distal. For each $j<n$, let $E_j = D_j D_n$. Now $\II_0+\II_1$ is indiscernible over $\bar{E}$ and $\II_0+a+\II_1$ is indiscernible over $E_{j_0} \cdots E_{j_{m-2}}$ for all $j_0 < \cdots < j_{m-2}<n$, so our hypothesis implies that $\II_0+a+\II_1$ is indiscernible over $\bar{E}$.
\end{proof}

\subsection{Strong Distality Rank for EM-Types}

\begin{definition}
	Given $m>0$, a complete $\EM$-type $\Gamma$ is \emph{strongly $m$-distal} if all Dedekind partitions $\II_0+\II_1\EMM \Gamma$ are strongly $m$-distal.
\end{definition}

\begin{definition}
	The \emph{strong distality rank} of a complete EM-type $\Gamma$, written $\SDR(\Gamma)$, is the least $m \geq 1$ such that $\Gamma$ is strongly $m$-distal. If no such finite $m$ exists, we say the strong distality rank of $\Gamma$ is $\omega$.
\end{definition}

\begin{lemma}\label{lem:witness}
	Let $m>0$. Suppose $\Gamma \in S^{\EM}$ is not strongly $m$-distal and $\II = \II_0 + \II_1 \EMM \Gamma$ is a Dedekind partition indexed by $(I_0 + I_1, <)$. There is a \underline{witness} $(\bar{D}, \phi, a)$ where
	\begin{itemize}
		\item $\bar{D}=(D_0, \ldots , D_{m-1})$ is such that $\II$ is indiscernible over $\bar{D}$,
		\item $\phi(x) \in \tp_{\bar{D}}^{\EM}(\II)$, and
		\item $a \in U$ is such that $\II_0 + a + \II_1$ is indiscernible over $\bigcup_{i \neq j}D_i$ for all $j<m$ but $\UU \not \models \phi(a)$.
	\end{itemize}
	Moreover, we may assume that $\bar{D}=(Bd_0, \ldots , Bd_{m-1})$ for some finite base $B\subseteq U$ and singletons $d_0,\ldots , d_{m-1} \in U^1$ and that $\II_0+a+\II_1$ is indiscernible over $B\cup\{d_i\colon i\neq j \}$ for each $j<m$. 
\end{lemma}

\begin{proof}
    Let $\JJ = \JJ_0 + \JJ_1 \EMM \Gamma$ be a Dedekind partition which is not strongly $m$-distal. Choose $D_0, \dots, D_{m-1}$ with $|D_0|+ \cdots + |D_{m-1}|$ minimal such that $\JJ$ is indiscernible over $\bar{D}$ and we can find 
	\begin{itemize}
		\item $\phi \in \tp_{\bar{D}}^{\EM}(\JJ)$,
		\item $a \in U$, and
		\item $\bar{b}_i$ increasing in $\JJ_i$
	\end{itemize}
	with $\UU \not \models \phi(\bar{b}_0, a, \bar{b}_1)$ and $\JJ_0 + a + \JJ_1$ indiscernible over each $\bar{D}\setminus D_i$.
	
	For each $i < m$, choose $d_i \in D_i^1$, and let $B = \bar{D}\setminus d_0\cdots d_{m-1}.$ Let $\JJ_0'$ be an end segment of $\JJ_0$ which completely excludes $\bar{b}_0$, and let $\JJ_1'$ be an initial segment of $\JJ_1$ which completely excludes $\bar{b}_1$. Now the sequence $\JJ' = \JJ_0' + \JJ_1'$ is indiscernible over $B\bar{b}_0\bar{b}_1\bar{d}$ and satisfies $\Gamma$. By compactness, we may assume each $\JJ_i'$ is indexed by $I_i$. Let $\sigma: \JJ' \rightarrow \II$ preserve indices. Since $\sigma$ extends to an automorphism, it follows that 
	\[
	\left( (\sigma(B\bar{b}_0\bar{b}_1 d_0), \ldots , \sigma(B\bar{b}_0\bar{b}_1 d_{m-1}) ), \; \; \sigma(\phi)  (\sigma(\bar{b}_0), x, \sigma(\bar{b}_1)  ), \; \; \sigma(a) \right)
	\] 
	is the desired witness. (Here we use $\sigma(\phi)$ is denote the formula created from $\phi$ by substituting $\sigma(b)$ for each named parameter $b\in B\bar{d}$ mentioned by $\phi$.)
\end{proof}

\begin{cor}\label{cor:SDR otype}
	Given $m>0$, a complete $\EM$-type $\Gamma$ is strongly $m$-distal if and only if there is a Dedekind partition $\II_0 + \II_1 \EMM \Gamma$ which is strongly $m$-distal.
\end{cor}

\begin{prop} \label{prop:strongdistal}
	Let $m>0$. Suppose a complete $\EM$-type $\Gamma$ is strongly $m$-distal. If a Dedekind partition $\II_0+\cdots +\II_{m+1}\EMM \Gamma$ is indiscernible over some small $B\subseteq U$ and $A=(a_0, \ldots , a_m)\subseteq U$ is such that every proper $A' \subset A$ inserts indiscernibly over $B$, then $A$ inserts indiscernibly over $B$. In particular, $\Gamma$ is $m$-distal.
\end{prop}
\begin{proof}
	Given a Dedekind partition $\II_0+ \cdots +\II_{m+1}\EMM \Gamma$ indiscernible over $B$, suppose every proper $A'\subset A$ inserts indiscernibly over $B$. Let $D_i=B\II_i a_i$ for each $i<m$. By strong $m$-distality, it follows that $\II_m + a_m +\II_{m+1}$ is indiscernible over $\bar{D}$.
\end{proof}

\begin{cor}\label{cor:DR <= SDR EM}
    If $\Gamma$ is a complete EM-type, then $\DR(\Gamma) \leq \SDR(\Gamma).$
\end{cor}

It is important to note that there are $\EM$-types for which this inequality is strict. 
A very nice example, suggested by Pierre Simon, can be found while working in the theory of the ordered random $m$-partite hypergraph ($\ORPG_m$). In this theory, the EM-type of any indiscernible sequence of singletons has distality rank 1 but strong distality rank $m$. We will discuss this example in more detail at the end of Subsection \ref{ss:ORPG}.

On the other hand, \cite[Lemma 2.7]{simon:distalandnondistal} implies that there is an important case where both ranks must agree.

\begin{fact}
    Suppose $T$ is NIP. 
    If $\Gamma$ is a complete EM-type and $\DR(\Gamma) = 1$, then $\SDR(\Gamma) = 1$. 
\end{fact}

\subsection{Strong Distality Rank for Theories}

\begin{definition}
	Given $m>0$, a theory $T$, not necessarily complete, is \emph{strongly m-distal} if for all completions of $T$ and all tuple sizes $\kappa$, every $\Gamma \in S^{\EM}(\kappa \cdot \omega)$ is strongly $m$-distal.
\end{definition}

\begin{definition}
	The \emph{strong distality rank} of a theory $T$, written $\SDR(T)$, is the least $m \geq 1$ such that $T$ is strongly $m$-distal. If no such $m$ exists, we say the strong distality rank of $T$ is $\omega$.
\end{definition}

\begin{prop}
	If $T$ is a complete theory and $B\subseteq U$ is a small set of parameters, then $\SDR (T_B) \leq \SDR (T)$.
\end{prop}
\begin{proof}
Similar to the proof of Proposition \ref{prop:DRA}.
\end{proof}

\begin{prop}\label{prop:boundsdr}
	If $T$ is an $L$-theory with quantifier elimination and $L$ contains no atomic formula with more than $m$ free variables, then $\SDR(T) \leq m$.
\end{prop}

\begin{proof}
	Let $b\in U^n$ and $d_0,\ldots, d_{m-1} \in U^1$. Suppose $\II = \II_0+\II_1 \subseteq U^\ell$ is Dedekind and indiscernible over $b\bar{d}$. Given $\phi \in  L(\ell+m+n)$, there is a $T$-equivalent formula 
	\[
	\bigvee_i\bigwedge_j \theta_{i,j} \left(x_{\sigma_{i,j}(0)},\ldots,x_{\sigma_{i,j}(m-1)} \right)
	\]
	where each $\theta_{i,j}$ is basic (i.e., an atomic formula or its negation) and each $\sigma_{i,j}: m \rightarrow \ell+m+n$ is a function. Thus, if $a\in U^\ell$ is such that $\II_0+a+\II_1$ is indiscernible over $b\bar{d}\setminus d_i$ for each $i<m$, then
	\[
		\UU\models \phi(a,\bar{d},b)\leftrightarrow \phi(a',\bar{d},b)
	\]
	for each $a'\in \II$. In light of Lemma \ref{lem:witness}, we conclude $T$ is strongly $m$-distal. 
\end{proof}

\begin{cor}\label{cor:bound SDR}
	Suppose $L$ is a language where all function symbols are unary and all relation symbols have arity at most $m \geq 2$. If $T$ is an $L$-theory with quantifier elimination, then $\DR(T) \leq \SDR(T) \leq m$.
\end{cor}
\begin{proof}
    Proposition \ref{prop:boundsdr} and Corollary \ref{cor:DR <= SDR EM}.
\end{proof}
\noindent We use this result in the next section to generate examples of theories with finite distality rank.

\section{Examples}\label{s:examples}

It appears that we have an infinite hierarchy which classifies theories by distality rank. 
We would like to show that this hierarchy is non-trivial by finding examples of theories which have distality rank $m$ for each $m \geq 1$.
Many examples of theories with distality rank 1 are listed in \cite{simon:distalandnondistal}. Among them are all o-minimal theories and the p-adics. We can quickly fill the rest of the finite ranks in the hierarchy using random graphs and hypergraphs.

\subsection{Random Graphs and Hypergraphs}

Fix $m \geq 2$, and let $L = \{\edge\}$ where $\edge$ is an $m$-ary relation symbol. 
Let $\RG_m$ denote the theory of the \emph{random $m$-uniform hypergraph} with hyperedge relation $\edge$. This structure is the Fra\"iss\'e limit of the class of all finite $m$-uniform hypergraphs, i.e., finite $L$-structures satisfying (2), below. 
Since Fra\"iss\'e limits are ultrahomogeneous (see \cite[Theorem 6.1.2]{hodges:shortermodeltheory}), $\RG_m$ clearly asserts the following schema:
\begin{enumerate}
    \item The domain, or \emph{vertex set} $\mathbb{V}$, is infinite.
    \item The relation $\edge$ is irreflexive and symmetric; i.e., it can be thought of as a collection of unordered sets,  or \emph{hyperedges}, each containing $m$ distinct vertices.
    \item For all $s,t < \omega$, if $A_0, \ldots, A_s, B_0, \ldots, B_t$ are distinct subsets in $[\mathbb{V}]^{m-1}$, then there is a vertex $d \in \mathbb{V}$ such that 
    \[\bigwedge_{i\leq s} A_i \edge d\ \land\ \bigwedge_{j\leq t} B_j \noedge d.\]
\end{enumerate}
Furthermore, a simple back-and-forth argument shows that this schema is countably categorical; thus, by Vaught's Test \cite[Theorem 2.2.6]{marker:modeltheory},  it is a complete axiomatization of $\RG_m$.
Since the theory of a Fra\"iss\'e limit has quantifier elimination (see \cite[Theorem 6.4.1]{hodges:shortermodeltheory}), Corollary \ref{cor:bound SDR} asserts that $\RG_m$ is $m$-distal.
However, it is not ($m-1$)-distal since, by compactness, there is an $(m-1)$-skeleton $\II=\II_0 + \cdots + \II_m \subseteq U^1$ along with elements $a_0,\ldots,a_{m-1} \in U^1$ such that the only hyperedge with vertices among $\II \cup \{a_0, \ldots, a_{m-1}\}$ is $\bar a$.
Therefore, we conclude that $\DR(\text{RG}_m) = m$.

\subsection{Random Partite Graphs and Hypergraphs}
Fix $m \geq 2$, and let $L = \{\edge, P_0, \ldots, P_{m-1}\}$ where $\edge$ is an $m$-ary relation symbol and each $P_i$ is a unary predicate symbol. 
Let $\text{RPG}_m$ denote the theory of the \emph{random $m$-partite hypergraph} with hyperedge relation $\edge$ and colors $P_0, \ldots, P_{m-1}$.
This structure is the Fra\"iss\'e limit of the class of all finite $L$-structures satisfying axioms (3) and (4), below. Using methods similar to those used for $\RG_m$, above, we may conclude that $\RPG_m$ has quantifier elimination and can be axiomatized by the following schema:
\begin{enumerate}
    \item The domain, or \emph{vertex set} $\mathbb{V}$, is infinite.
    \item Each $P_i$, often called a \emph{color}, contains infinitely many vertices.
    \item The colors partition the vertex set; i.e., $\mathbb{V} = P_0 \sqcup \cdots \sqcup P_{m-1}.$
    \item The hyperedge relation $\edge$ is a subset of $P_0 \times \cdots \times P_{m-1}$.
    \item Given $\ell < m$ and $s,t<\omega$, if $\bar{a}_0, \dots, \bar{a}_s$, $\bar{b}_0, \dots, \bar{b}_t$ are distinct tuples in $\prod_{k\neq \ell} P_k$, then there is a vertex $d \in P_\ell$ such that 
    \[\bigwedge_{i\leq s} \bar{a}_i \edge d\ \land\ \bigwedge_{j\leq t} \bar{b}_j \noedge d.\]
\end{enumerate}
Corollary \ref{cor:bound SDR} asserts that $\RPG_m$ is $m$-distal. 
However, by compactness, there is an indiscernible skeleton 
\[\II = \II_0 + \cdots + \II_m \subseteq \prod_{k<m} P_k(U)\]
along with tuples 
\[\bar{a}_0, \ldots, \bar{a}_{m-1} \in \prod_{k<m} P_k(U)\] such that the only hyperedge with vertices among $\II \bar{a}_0 \cdots \bar{a}_{m-1}$ is 
\[(\pi_0(\bar{a}_0), \ldots, \pi_{m-1}(\bar{a}_{m-1}))\]
where each $\pi_k : U^m \to U^1$ is the standard projection $(b_0, \ldots, b_{m-1}) \mapsto b_k$. 
Thus, we conclude that $\DR(\text{RPG}_m) = m$.

\subsection{Ordered Random Partite Graphs and Hypergraphs}\label{ss:ORPG}

Fix $m \geq 2$, and let $L = \{\edge, P_0, \ldots, P_{m-1}, <\}$ where $\edge$ is an $m$-ary relation symbol, each $P_i$ is a unary predicate symbol, and $<$ is a binary relation symbol. 
Let $\text{ORPG}_m$ denote the theory of the \emph{ordered random $m$-partite hypergraph} with hyperedge relation $\edge$, colors $P_0, \ldots, P_{m-1}$, and linear order $<$.
This structure is the Fra\"iss\'e limit of the class of all finite $L$-structures satisfying axioms (1), (3), and (4), below. Using methods similar to those used for $\RG_m$, above, we may conclude that $\ORPG_m$ has quantifier elimination and can be axiomatized by the following schema:

\begin{enumerate}
    \item The domain, or \emph{vertex set} $\mathbb{V}$, is linearly ordered by $<$.
    \item Each $P_i$, often called a \emph{color}, contains infinitely many vertices, with no least or greatest element in terms of the ordering $<$.
    \item The colors partition the vertex set so that $P_0 < \cdots < P_{m-1}$.
    \item The hyperedge relation $\edge$ is a subset of $P_0 \times \cdots \times P_{m-1}$.
    \item Given $\ell < m$ and $s,t<\omega$, if $\bar{a}_0, \dots, \bar{a}_s$, $\bar{b}_0, \dots, \bar{b}_t$ are distinct tuples in $\prod_{k\neq \ell} P_k$ and $d_0, d_1 \in P_\ell$ with $d_0 < d_1$, then there is a vertex $d \in P_\ell$ such that $d_0 < d < d_1$ and
    \[\bigwedge_{i\leq s} \bar{a}_i \edge d\ \land\ \bigwedge_{j\leq t} \bar{b}_j \noedge d.\]
\end{enumerate}
Even though we have added an ordering to the language, the exact same argument used to show that $\DR(\text{RPG}_m) = m$, above, applies to $\ORPG_m$ as well.

In \cite[Subsection 2.4]{simon:distalandnondistal}, Pierre Simon proves that if $T$ is an NIP theory, then $T$ has distality rank 1 if and only if every complete $\EM$-type whose variables are singletons has distality rank 1 (i.e., $\forall \, \Gamma\in S^{\EM} (1\cdot \omega)\; \DR(\Gamma)=1$). 
This is not true in general. 
In fact, $\ORPG_m$ is a counterexample.
In this theory, if $\II = \II_0 + \cdots + \II_{m+1} \subseteq U^1$ is an indiscernible skeleton, it must either either be constant or strictly monotonic (increasing or decreasing) and monochromatic.
In either case, it is 1-distal, but as we showed above, $\ORPG_m$ is not.

The reader may have noticed that the distality rank of every theory discussed in this section agrees with its strong distality rank.
It is unclear whether or not this agreement, at the global level, holds for all theories; however, there are cases where distality rank and strong distality rank disagree, locally, for a particular $\EM$-type.
Again, $\ORPG_m$ provides us with an example.
Let $\II = (a_r : r \in \mathbb{R}) \subseteq P_0(U)$ be an increasing indiscernible sequence, and let $\Gamma = \tp^{\EM}(\II)$. 
As we determined above, the distality rank of $\Gamma$ is 1.
Choose $b_1, \dots, b_{m-2}$ such that each $b_k \in P_k(U)$.
By (5) and compactness, there exists $b_{m-1} \in P_{m-1}(U)$ such that $a_0 \edge b_1 \cdots b_{m-1}$ but $a_r \noedge b_1 \cdots b_{m-1}$ for all $r \neq 0$, so $\Gamma$ is not strongly $(m-1)$-distal. 
In fact, we can use Corollary \ref{cor:bound SDR} to conclude that $\SDR(\Gamma) = m.$

\subsection{An Example of a Theory with Infinite Distality Rank}

Let $L = \{ \edge_2, \edge_3, \ldots \}$ where each $\edge_m$ is an $m$-ary relation, and let $\RG_\omega$ denote the theory of the Fra\"iss\'e limit of the class of all finite structures where each $\edge_m$ is a hyperedge relation (i.e., reflexive and symmetric). 
Using methods similar to those used for $\RG_m$, above, we may conclude that $\RG_\omega$ has quantifier elimination and can be axiomatized by the following schema: 
\begin{enumerate}
    \item The domain, or \emph{vertex set} $\mathbb{V}$, is infinite.
    \item For each $m \geq 2$, the relation $\edge_m$ is irreflexive and symmetric.
    \item For all integers $r \geq 2$, if $(s_2, \ldots, s_r) \subseteq \omega$ and $ (t_2, \ldots, t_r) \subseteq \omega$, then
\begin{align*}
	\forall  A_{2,0}, & \ldots, A_{2, s_2}, B_{2,0}, \ldots, B_{2,t_2} \text{ distinct } \in [\mathbb{V}]^{2-1} \\
	& \vdots \\
	\forall A_{r,0}, & \ldots, A_{r, s_r}, B_{r,0}, \ldots, B_{r,t_r} \text{ distinct } \in [\mathbb{V}]^{r-1} \\
	\\
	& \exists \, d\in \mathbb{V} \quad \left[ \bigwedge_{i\leq s_2} A_{2,i} \edge_2 d \wedge \bigwedge_{j\leq t_2} B_{2,j} \noedge_2 d \wedge \cdots \wedge \bigwedge_{i\leq s_r} A_{r,i} \edge_r d \wedge \bigwedge_{j\leq t_r} B_{r,j} \noedge_r d \right].
\end{align*}
\end{enumerate}
Furthermore, for any $m\geq 2$, by compactness, there is an indiscernible $(m-1)$-skeleton $\II=\II_0 + \cdots + \II_m \subseteq U^1$ along with elements $a_0,\ldots,a_{m-1} \in U^1$ such that the only hyperedge with vertices among $\II \cup \{a_0, \ldots, a_{m-1}\}$ is $\bar a$. Thus, we conclude that $\DR(\text{RG}_\omega) = \omega$.

\subsection{Stable Examples}\label{ss:stable examples}
According to Proposition \ref{prop:stablenotdistal}, there are no stable theories with distality rank 1. 
Since the concept of distality was originally used to decompose NIP theories into their stable and distal components, which in some sense are polar opposites, it  might seem natural to assume that distality rank separates NIP theories into a spectrum with distal theories having rank 1 and stable theories, rank $\omega$. 
This notion, however, is erroneous. 
In fact, Corollary \ref{cor:bound SDR} allows us to quickly see that several well-known stable theories have distality rank 2. 
We list a few in the next paragraph.

Let $L = \{R\}$, where $R$ is a binary relation, and fix $k > 0$.
The theory asserting that $R$ is an equivalence relation with infinitely many equivalence classes, all of which have size $k$, has distality rank 2. Furthermore, the distality rank of this theory does not change if we require all of the classes to be infinite. These examples use a relational language, but it is also easy to find examples where the language includes a function symbol. Consider the theories of $(\mathbb{N},\sigma, 0)$ and $(\mathbb{Z},\sigma)$ where $\sigma$ is the successor function. Both are stable with distality rank 2.

Another well-known stable theory, that of algebraically closed fields (ACF) in the language of rings $\{+, -, \cdot, 0, 1\}$, has infinite distality rank. Given $m > 0$, let $\II = \II_0 + \cdots + \II_{m+1} \subseteq{U^1}$ be a nonconstant indiscernible skeleton. 
Choose $a_0, \ldots, a_{m-1}\in U^1$ algebraically independent over $\II$, and let 
\[a_m = a_0 + \cdots + a_{m-1}.\]
We can insert any $m$ of the $a_i$'s, but we cannot  insert all of them. Thus, we conclude that $\DR(\ACF) > m$. Furthermore, a similar argument shows that if $T$ is any strongly minimal expansion of the theory of an infinite group, then $\DR(T) = \omega.$

Notice that we have only given examples of stable theories with distality ranks $2$ and $\omega$. In Section \ref{s:geometric stability}, we will show that these are the only ranks possible for superstable theories. 

\begin{ques} \label{ques:gap for stable}
 Can a stable theory have distality rank $m$ where $2<m<\omega$?
\end{ques}

\noindent It turns out that Question \ref{ques:gap for stable} is equivalent to a long-standing open question concerning $k$-triviality posed by Goode in \cite{goode:trivialconsiderations}. We will discuss this in more detail in Section \ref{s:conclusion}. 

\subsection{Unstable NIP Examples}
There are many unstable NIP theories with distality rank 1. In fact, using Proposition \ref{prop:stablenotdistal} together with Corollary \ref{cor:distal implies NIP} from the next section, we see that every distal theory is both unstable and NIP. 

For an example of an unstable NIP theory with distality rank 2, we can simply add a linear order to an example from the previous subsection. Consider the theory of an equivalence relation $R$ with infinitely many classes and a linear order $<$ with no endpoints such that each equivalence class is a dense subset of the domain. This is the Fra\"iss\'e limit of the class of all finite structures with an equivalence relation $R$ and a linear order $<$. 

Similarly, we can expand ACF to produce an example of an unstable NIP theory with infinite distality rank. Consider the theory of algebraically closed valued fields (ACVF) in the standard three-sorted presentation with sorts for the value group $\Gamma$ and the residue field $k$, in addition to the home sort for the valued field $K$. See Chapter 4 of \cite{marker:acvf} for more details. Given any $(K,\Gamma,k) \models \ACVF$, it follows that $k\models \ACF$. Furthermore, any subset of $k^n$ which is definable in $(K,\Gamma,k)$ is also definable in the reduct $k$ (see \cite[Corollary 4.25(i)]{marker:acvf}). Thus, we can use the same argument that we used above for ACF to conclude that $\DR(\ACVF) = \omega$.

Note that we have only given examples of NIP theories with distality ranks 1, 2, and $\omega$. 

\begin{ques} \label{ques:gap for NIP}
Can a NIP theory have distality rank $m$ where $2<m<\omega$?
\end{ques}

\noindent The answer to this question most likely depends on the answer to Question \ref{ques:gap for stable}. We will discuss this in more detail in Section \ref{s:conclusion}.

\section{Distality Rank and Shelah's Dependence Rank}\label{s:m-dependence}

Saharon Shelah introduced the notion of $m$-dependence in \cite[Section 5(H)]{shelah:stronglydependenttheories} and \cite[Definition 2.4]{shelah:definablegroupsfordependent}.
When applied to theories, this notion generalizes NIP in much the same way that $m$-distality generalizes distality. 
In particular, a theory is 1-dependent if and only if it is NIP. 
Furthermore, if a theory is $m$-dependent for some $m>0$, then it is also $n$-dependent for every $n>m$. 
After reading an earlier draft of this paper, Artem Chernikov noticed that if a theory is $m$-distal for some $m>0$, then it is also $m$-dependent. 
His result is formalized in Proposition \ref{prop:m-distal implies m-dependent}, below.

\begin{definition}\label{def:IP_m formula}
    Given $m>0$, we say a formula $\phi(x_0, \ldots, x_{m-1}, y) \in L_U$ is \emph{$m$-independent}, or IP$_m$, if we can find an infinite set $A_i \subseteq U^{|x_i|}$ for each $i<m$ such that for every subset $B \subseteq A_0 \times \cdots \times A_{m-1}$, there exists $b \in U^{|y|}$ with 
    \[
    \phi(A_0, \ldots, A_{m-1}, b) = B.
    \]
    Otherwise, we say $\phi$ is \emph{$m$-dependent}, or NIP$_m$.
\end{definition}

\noindent In light of Corollary \ref{cor:reordering variables IP_m}, below, this definition is equivalent to \cite[Definition 2.1]{chernikov:n-dependence}.

\begin{fact}
A formula $\phi \in L_U(x,y)$ is IP$_1$ if and only if it is IP.
\end{fact}

\noindent For a proof of this fact, see \cite[Lemma 2.7]{simon:guidenip}.

\begin{definition}
    Given $m>0$, we say $T$ is \emph{$m$-independent}, or IP$_m$, if some formula $\phi(x_0,\ldots,x_{m-1},y)\in L_U$ is IP$_m$. Otherwise, we say $T$ is \emph{$m$-dependent}, or NIP$_m$.
\end{definition}

\noindent It is easy to see that this property is invariant under base change. In particular, if $T$ is IP$_m$, we can always find a formula with no parameters witnessing this.

In order to prove Proposition \ref{prop:m-distal implies m-dependent}, it will be helpful to use a characterization of $m$-dependence which involves the ability to ``embed'' certain random hypergraphs.  In the following lemma, we show that for $m > 0$, a formula is IP$_m$ if and only if it interprets the hyperedge relation of a random $m$-partite hypergraph. (See also \cite[Proposition 5.2]{chernikov:n-dependence}.)

\begin{lemma}\label{lem:IP_m formula}
    Given $m>0$, a formula $\phi(x_0, \ldots, x_m) \in L_U$ is IP$_m$ if and only if there is a hypergraph
    \[
    \mathcal{G} = (\mathbb{V}, \edge, P_0, \ldots, P_m) \models \RPG_{m+1}
    \]
    and a map $f: \mathbb{V}^1 \rightarrow U^{<\omega}$, where each $f(P_k(\mathbb{V})) \subseteq U^{|x_k|}$, such that $\phi$ \underline{interprets} the hyperedge relation $\edge$, i.e., given $a_k \in P_k(\mathbb{V})$ for each $k \leq m$, we have 
    \[
    \mathcal{U} \models \phi(f(a_0),\ldots,f(a_m)) \quad \iff \quad \mathcal{G} \models \edge \bar{a}.
    \]
\end{lemma}
\begin{proof}
    ($\Rightarrow$): Suppose $\phi(x_0,\ldots,x_m)$ is IP$_m$. Choose an infinite $A_i \subseteq U^{|x_i|}$ for each $i<m$ such that for every subset $B \subseteq A_0 \times \cdots \times A_{m-1}$, there exists $b \in U^{|x_m|}$ with 
    \[
    \phi(A_0, \ldots, A_{m-1}, b) = B.
    \]
    Let $\mathcal{G} = (\mathbb{V}, \edge, P_0, \ldots, P_m)$ be a countable model of $\RPG_{m+1}$, and for each $k<m$, let $f_k:P_k(\mathbb{V})\rightarrow A_k$ be an injection. Now we can define $f_m:P_m(\mathbb{V})\rightarrow U^{|x_m|}$ so that if $a_k \in P_k(\mathbb{V})$ for each $k \leq m$, then 
    \[
    \mathcal{U} \models \phi(f_0(a_0),\ldots,f_m(a_m)) \quad \iff \quad \mathcal{G} \models \edge \bar{a}.
    \]
    ($\Leftarrow$): Let $A_i = f(P_i(\mathbb{V}))$ for each $i < m,$ and let $B \subseteq A_0\times \cdots \times A_{m-1}$. If 
    \[
    \bar{a}_0, \dots, \bar{a}_s, \bar{a}'_0, \dots, \bar{a}'_t \in A_0\times \cdots \times A_{m-1}
    \]
    with each $\bar{a}_i \in B$ and each $\bar{a}'_j\notin B$, then since $\mathcal{G}\models \RPG_{m+1}$ and $\phi$ interprets $\edge$, we can find $d \in P_m(\mathbb{V})$ such that
    \[
        \bigwedge_{i\leq s} \phi(\bar{a}_i,f(d)) \quad \land \quad \bigwedge_{j\leq t} \lnot\phi(\bar{a}'_j, f(d))
    \]
    Thus, by compactness, there exists $b \in U$ such that 
    $\phi(A_0, \ldots, A_{m-1}, b\,) = B.$
\end{proof}

\begin{cor}\label{cor:reordering variables IP_m}
    Reordering the variables does not change whether or not a formula is IP$_m$ for any $m>0$.
\end{cor}

Furthermore, as stated in the following fact, a formula is IP$_m$ if and only if it interprets the hyperedge relation of an ordered random $m$-partite graph $\mathcal{G}$ on some $\mathcal{G}$-indiscernible sequence. For a proof of this, see \cite[Proposition 5.2]{chernikov:n-dependence}. 

\begin{fact}\label{fact:IP_m generalized indiscernible witness}
    Given $m>0$, a formula $\phi(x_0, \ldots, x_m) \in L_U$ is IP$_m$ if and only if there is a hypergraph 
    \[
    \mathcal{G}=(\mathbb{V},\edge,P_0,\ldots, P_m,<)\models \ORPG_{m+1}
    \] 
    and a map $f: \mathbb{V}^1 \rightarrow U^{<\omega}$ with each $f(P_k(\mathbb{V})) \subseteq U^{|x_k|}$ such that the following hold:
    \begin{enumerate}[(i)]
        \item If two finite tuples of vertices $(a_0,\ldots,a_{n-1})$ and $(b_0,\ldots,b_{n-1})$ have the same type in $\mathcal{G}$, then their images  $(f(a_0),\ldots,f(a_{n-1}))$ and $(f(b_0),\ldots,f(b_{n-1}))$ have the same type in $\mathcal{U}$.
        \item The formula $\phi$ interprets the hyperedge relation $\edge$.
    \end{enumerate}
\end{fact}

\begin{prop}[Chernikov]\label{prop:m-distal implies m-dependent}
    Given $m>0$, if $T$ is $m$-distal, then it is also $m$-dependent.
\end{prop}

\begin{proof}
    Suppose $\phi(x_0, \ldots, x_m) \in L$ is IP$_m$.
    Let $\mathcal{G}$ and $f$ witness this as in Fact \ref{fact:IP_m generalized indiscernible witness}.
    By compactness, there is an indiscernible skeleton 
    \[
    \II = \II_0 + \cdots + \II_{m+1} \subseteq \prod_{k \leq m} P_k(\mathbb{V})
    \]
    along with tuples 
    \[
    \bar{a}_0, \ldots, \bar{a}_{m} \in \prod_{k \leq m} P_k(\mathbb{V})
    \] 
    such that the only hyperedge with vertices among $\II \bar{a}_0 \cdots \bar{a}_{m}$ is 
    \[
    (\pi_0(\bar{a}_0), \ldots, \pi_{m-1}(\bar{a}_{m}))
    \]
    where each $\pi_k : \mathbb{V}^{m+1} \to \mathbb{V}^1$ is the standard projection $(b_0, \ldots, b_{m}) \mapsto b_k$. 
    
    Given $b_k \in P_k(\mathbb{V})$ for each $k \leq m$, let $F(\bar b) = (f(b_0), \ldots, f(b_m))$.
    It follows that the skeleton
    \[
    F(\II) = F(\II_0) + \cdots + F(\II_{m+1}) \subseteq U
    \]
    is not $m$-distal since any proper subset of
    \[
    A = (F(\bar a_0), \ldots, F(\bar a_m))
    \]
    inserts indiscernibly but $A$ itself does not.
\end{proof}

\begin{cor} \label{cor:distal implies NIP}
    If $\DR(T)=1$, then $T$ is NIP.
\end{cor}

In other words, if $T$ is distal according to the original definition \cite[Definition 2.1]{simon:distalandnondistal}, then $T$ is also NIP. Although this result may be considered folklore, we believe this is the first time it has appeared in the literature. Hieronymi and Nell are credited with the proof of \cite[Proposition 2.8]{gehret:distalitytransseries} which implies that $T$ is NIP if all its indiscernible sequences satisfy the ``external characterization'' of distality (i.e., $\SDR(T)=1$). However, Corollary \ref{cor:distal implies NIP} is stronger since \cite[Lemma 2.7]{simon:distalandnondistal}, which shows that the ``external characterization'' of distality is equivalent to the original definition, assumes that the theory under consideration is NIP. 

\begin{cor}
    The following are equivalent: 
    \begin{enumerate}[(i)]
        \item  $\DR(T) = 1$.
        \item  $\SDR(T)=1$.
    \end{enumerate}
\end{cor}

\section{Type Determinacy}\label{s:type determinacy}

Let $A \subseteq U$ be a small set of parameters.

\begin{definition}\label{def:mdetermined}
	Let $n > m > 0$. Given $n$ variables $x_0, \ldots, x_{n-1}$ and a type $p\in S_A(x_0, \ldots, x_{n-1})$, we say that $p$ is \emph{$m$-determined} if it is completely determined by the types
	\[
		\left\{ q \in S_A(x_{i_0}, \ldots, x_{i_{m-1}}) \, : \, i_0 < \cdots < i_{m-1} < n \text{ and } q \subseteq p \right\}.
	\] 
\end{definition}

\begin{prop}
A Dedekind partition $\II_0 + \cdots + \II_n$ of an indiscernible sequence $\II$ is $m$-distal if and only if $\limtp_\II(\cc_0, \ldots, \cc_{n-1})$ is $m$-determined.
\end{prop}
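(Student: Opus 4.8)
The plan is to reduce the proposition to a single bridging lemma translating ``inserting indiscernibly'' into ``realizing a limit type,'' after which the two sides of the equivalence become one and the same type-theoretic statement. Write $p=\limtp_\II(\cc_0,\dots,\cc_{n-1})$. I would first establish, for every nonempty $S\subseteq\{0,\dots,n-1\}$: \textbf{(i)} $\limtp_\II(\cc_i:i\in S)$ is a complete type and coincides with the restriction of $p$ to the variables $(x_i:i\in S)$ --- this needs only that the $\cc_i$ are distinct Dedekind cuts; and \textbf{(ii)} the subtuple $(a_i:i\in S)$ inserts into $\II_0+\dots+\II_n$ if and only if $(a_i:i\in S)\models\limtp_\II(\cc_i:i\in S)$. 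Two special cases: with $S=\{0,\dots,n-1\}$ this says $A$ inserts iff $A\models p$, and with $S=\{i\}$ it says $a_i$ inserts at $\cc_i$ iff $a_i\models\limtp_\II(\cc_i)$.

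Granting the lemma, the proposition unwinds through the contrapositive. By definition the partition is $m$-distal iff for every $A=(a_0,\dots,a_{n-1})$: if every $m$-element subtuple of $A$ inserts, then $A$ inserts; by (ii) this says precisely that for every tuple $\bar a$, if $(a_i:i\in S)\models\limtp_\II(\cc_i:i\in S)$ for every $S$ with $\abs S=m$, then $\bar a\models p$. On the other hand, by (i) the $m$-types contained in $p$ are exactly the types $\limtp_\II(\cc_i:i\in S)$ with $\abs S=m$, each of which is complete; so $p$ is $m$-determined iff the only complete type in $S_\II(x_0,\dots,x_{n-1})$ extending all of them is $p$, i.e.\ iff every $\bar a$ whose restriction to $(x_i:i\in S)$ realizes $\limtp_\II(\cc_i:i\in S)$ for all such $S$ satisfies $\bar a\models p$. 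These are the same condition, so $m$-distality of the partition is equivalent to $m$-determinacy of $p$.

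It remains to prove the lemma. Part (i) is immediate from the definition of the limit type at a tuple of cuts: a formula not mentioning $x_i$ for $i\notin S$ lies in $p$ iff a witnessing choice of endpoints exists in the $S$-coordinates, the other intervals only needing to be nonempty --- which holds by Dedekindness, and is forced anyway by completeness of $p$. For part (ii), suppose first $(a_i:i\in S)\models\limtp_\II(\cc_i:i\in S)$, and take a formula $\psi$ with an increasing tuple $\bar u$ from the linear order obtained by inserting the $a_i$, $i\in S$. Then $\bar u$ is an interleaving of an increasing tuple from $\II$ with a subtuple $(a_i:i\in T)$, $T\subseteq S$, in which each $a_i$ occupies the position of the cut $\cc_i$; choosing, for each $i\in T$, an index $r_i$ in a short interval straddling $\cc_i$ --- the intervals for distinct cuts taken pairwise separated and all of them avoiding the finitely many parameters of $\psi$ --- the value of $\psi$ on $\bar u$ agrees with its value on the corresponding increasing tuple of $\II$, since $(a_i:i\in T)\models\limtp_\II(\cc_i:i\in T)$ (a restriction of $\limtp_\II(\cc_i:i\in S)$) records exactly that $\psi$ is constant on such products of straddling intervals. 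Hence the inserted sequence has the same $\EM$-type as $\II$ and is indiscernible. Conversely, if the inserted sequence is indiscernible and $\phi((x_i)_{i\in S})$ over $\II$ lies in $\limtp_\II(\cc_i:i\in S)$, one picks indices $r_i$ in the witnessing intervals, close enough to the cuts $\cc_i$ that $(b_{r_i})_{i\in S}$ realizes the same order relations over the parameters of $\phi$ as $(a_i)_{i\in S}$; indiscernibility of the inserted sequence transfers the truth of $\phi$ to $(a_i)_{i\in S}$, and completeness of $\limtp_\II(\cc_i:i\in S)$ then yields $(a_i:i\in S)\models\limtp_\II(\cc_i:i\in S)$.

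The main obstacle is part (ii), and specifically the order-type bookkeeping inside it: one must choose the auxiliary indices $r_i$ simultaneously, in mutually separated intervals straddling the various Dedekind cuts and missing whatever finite set of parameters is at hand, and verify that swapping each $a_i$ for $b_{r_i}$ preserves all the order relations among the entries (including the non-inserted entries of $\bar u$ and the parameters). I expect it is cleanest to settle the case $\abs S=1$ first --- that a single point inserts at a Dedekind cut $\cc$ iff it realizes $\limtp_\II(\cc)$ --- and then run the same argument for all cuts at once; alternatively one could insert the points one at a time, at the cost of tracking how the relevant limit type changes as the base sequence grows.
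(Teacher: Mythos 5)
Your proposal is correct and takes essentially the same approach as the paper: the entire content of the paper's proof is your part (ii) --- that a subtuple $(a_i:i\in S)$ inserts into the partition if and only if it realizes $\limtp_\II(\cc_i:i\in S)$ --- stated as a one-line observation. You have additionally made explicit the companion fact (your part (i)) that these limit types are exactly the restrictions of $p$ to the corresponding variable subsets, which the paper uses implicitly, and you fill in the order-type bookkeeping for (ii) that the paper takes as self-evident; both elaborations are sound.
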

\begin{proof}
If $a_0,\dots,a_{n-1}\in U$ and $i_0<\cdots <i_{t-1} <n $ for some $t \leq n$, then $(a_{i_0}, \ldots, a_{i_{t-1}})$ inserts into $\II_0 + \cdots + \II_n$ if and only if \[ a_{i_0},\ldots,a_{i_{t-1}}\models \limtp_\II(\mathfrak{c}_{i_0},\ldots,\mathfrak{c}_{i_{t-1}}).\]
\end{proof}

Let $\MM \models T$ be $|A|^+$-saturated with $A \subseteq M$. 

\begin{lemma}\label{lem:m-determinacy for saturated models}
	 Let $n>m>0$, and let $p\in S_U(x_0,\ldots,x_{n-1})$ be invariant over $A$. The global type $p$ is $m$-determined if and only if its restriction $p\dhr_M$ is also $m$-determined.
\end{lemma}
\begin{proof}
($\Rightarrow$): Suppose $p$ is $m$-determined. Let $\phi(x_0,\ldots,x_{n-1},y)\in  L$ and $b\in M$ be such that $\phi(\bar{x},b)\in p\dhr_M$. By compactness, for each increasing $\sigma\colon m\rightarrow n$, there is a formula $\psi_\sigma(x_{\sigma(0)},\ldots, x_{\sigma(m-1)},d)\in p$ such that 
\[
\bigwedge_{\sigma}\psi_\sigma(x_{\sigma(0)},\ldots, x_{\sigma(m-1)},d)\vdash \phi(\bar{x},b).
\]
By saturation, there is a $d'\in M$ such that $d'\equiv_{Ab}d$. It follows that 
\[
\bigwedge_{\sigma}\psi_\sigma(x_{\sigma(0)},\ldots, x_{\sigma(m-1)},d')\vdash \phi(\bar{x},b).
\]
Furthermore, for each $\sigma$, we have $\psi_\sigma(x_{\sigma(0)},\ldots, x_{\sigma(m-1)},d')\in p\dhr_M$ since $p$ is invariant over $A$. Thus, the restriction $p\dhr_M$ is $m$-determined.

($\Leftarrow$): Suppose $p\dhr_M$ is $m$-determined. Let $\phi(x,b)\in p$, and let $b'\in M$ such that $b'\equiv_A b$. By invariance, the formula $\phi(x,b')\in p$. By compactness, for each increasing $\sigma\colon m\rightarrow n$, there is a formula $\psi_\sigma(x_{\sigma(0)},\ldots, x_{\sigma(m-1)},d')\in p\dhr_M$ such that 
\[
	\bigwedge_{\sigma}\psi_\sigma(x_{\sigma(0)},\ldots, x_{\sigma(m-1)},d')\vdash \phi(x,b').
\]

\noindent Let $d\in U$ such that $bd\equiv_A b'd'$. It follows that 

\[
\bigwedge_{\sigma}\psi_\sigma(x_{\sigma(0)},\ldots, x_{\sigma(m-1)},d)\vdash \phi(x,b).
\]
Furthermore, for each $\sigma$, we have $\psi_\sigma(x_{\sigma(0)},\ldots, x_{\sigma(m-1)},d)\in p$ since $p$ is invariant over $A$. Thus, the global type $p$ is $m$-determined.
\end{proof}

Let $B\subseteq U$ be a small set of parameters, and let $\lambda$ and $\kappa$ be small cardinals.

\begin{lemma}\label{lem:collate}
	For each $\alpha < \lambda $, assume we have the following:
	\begin{itemize}
		\item a sequence of tuples $\II_\alpha$ indexed by a linear order $(I_\alpha,<)$ with a Dedekind cut $\cc_\alpha$,
		\item an initial segment $I^-_\alpha\subseteq \cc^-_\alpha$ and an end segment $I^+_\alpha\subseteq \cc^+_\alpha$, both proper,
		\item linear orders $(J^-_\alpha,<)$ and $(J^+_\alpha,<)$, and
		\item an index $$J_\alpha=I^-_\alpha + J^-_\alpha + J^+_\alpha + I^+_\alpha$$ with distinguished cut $$\dd_\alpha =(I^-_\alpha + J^-_\alpha , J^+_\alpha + I^+_\alpha).$$
	\end{itemize}
Let $(A_\beta : \beta < \kappa)$ be a family of sequences with each $A_\beta = (a^\beta _\alpha : \alpha < \lambda )\subseteq U$.
For each $\alpha < \lambda$, there is a sequence $\JJ_\alpha$ indexed by $J_\alpha$ agreeing with $\II_\alpha$ on $I^-_\alpha$ and $I^+_\alpha$ such that for all $\beta < \kappa$ and all $A'_\beta \subseteq A_\beta$, if the family $(\II_\alpha: \alpha < \lambda)$ is mutually indiscernible over $B$ after inserting each $a^\beta_\alpha\in A'_\beta$ at the corresponding cut $\cc_\alpha$, then the family $(\JJ_\alpha: \alpha < \lambda)$ is mutually indiscernible over $B$ after inserting each $a^\beta_\alpha\in A'_\beta$ at the corresponding cut $\dd_\alpha$.
\end{lemma}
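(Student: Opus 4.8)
The plan is to build each $\JJ_\alpha$ by a single application of the Standard Lemma relative to a suitable ambient parameter set, so that the "collating" of all the sequences is actually done simultaneously in one modular construction, and then verify the indiscernibility-after-insertion property by translating it back to $\II_\alpha$ via an automorphism. Concretely, fix $\alpha < \lambda$. Let $P_\alpha$ be the set of parameters appearing in $I^-_\alpha \cup I^+_\alpha$ together with $B$ together with everything occurring in all the other sequences $\II_{\alpha'}$ ($\alpha' \ne \alpha$) and all the elements $a^\beta_{\alpha'}$ for $\beta<\kappa$, $\alpha'\ne\alpha$ — i.e., everything in sight except $\II_\alpha$ itself and the column $(a^\beta_\alpha : \beta<\kappa)$. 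The key observation is that $\II_\alpha$, as indexed by $I_\alpha$, remains indiscernible over $P_\alpha$ after inserting any $A'_\beta$-subcolumn at $\cc_\alpha$: that is precisely the mutual-indiscernibility hypothesis read off at coordinate $\alpha$. First I would record this reformulation so that the $\kappa$-many insertion conditions become $\kappa$-many instances of "indiscernible over $P_\alpha$ after one insertion at $\cc_\alpha$."

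Next, I would stretch the cut. The sequence $\II_\alpha \dhr_{I^-_\alpha} + \II_\alpha\dhr_{I^+_\alpha}$ is indiscernible over $P_\alpha$ (being a subsequence of an indiscernible sequence), so by the Standard Lemma there is a sequence $\JJ_\alpha$ indexed by $J_\alpha = I^-_\alpha + J^-_\alpha + J^+_\alpha + I^+_\alpha$, agreeing with $\II_\alpha$ on $I^-_\alpha$ and $I^+_\alpha$, with $\JJ_\alpha$ indiscernible over $P_\alpha$; moreover we may take $\EM_{P_\alpha}(\JJ_\alpha)$ to extend the appropriate finitary type, so $\JJ_\alpha$ and $\II_\alpha$ realize the same EM-type over $P_\alpha$ in the sense needed. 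Now for a fixed $\beta$ and $A'_\beta \subseteq A_\beta$, suppose the $\II$-family is mutually indiscernible over $B$ after inserting each $a^\beta_\alpha$ (for $\alpha$ with $a^\beta_\alpha \in A'_\beta$) at $\cc_\alpha$. Reading this at coordinate $\alpha$: the sequence $\II_\alpha$ with $a^\beta_\alpha$ inserted at $\cc_\alpha$ is indiscernible over $P_\alpha$ (which includes all the other inserted points). Since $\JJ_\alpha \equiv^{\EM}_{P_\alpha} \II_\alpha$ and the cut $\dd_\alpha$ of $J_\alpha$ sits "inside" the stretched region between $I^-_\alpha$ and $I^+_\alpha$ exactly where $\cc_\alpha$ sat, there is an order-isomorphism of a cofinal/coinitial piece taking the relevant finite data around $\dd_\alpha$ to finite data around $\cc_\alpha$ fixing $P_\alpha$; extending it to an automorphism and using $P_\alpha$-indiscernibility of $\II_\alpha$-with-insertion shows $\JJ_\alpha$ with $a^\beta_\alpha$ inserted at $\dd_\alpha$ is indiscernible over $P_\alpha$ as well. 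Because $P_\alpha$ contains $B$ and all the other $\JJ_{\alpha'}$-data (after we note the other coordinates are untouched), running this over all $\alpha<\lambda$ gives mutual indiscernibility of the $\JJ$-family over $B$ after the insertions at the $\dd_\alpha$.

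There is one genuine subtlety, which I expect to be the main obstacle: the parameter set $P_\alpha$ I want to use at coordinate $\alpha$ should contain the other sequences $\JJ_{\alpha'}$, not the original $\II_{\alpha'}$ — but the $\JJ_{\alpha'}$ are themselves only produced at coordinate $\alpha'$, so there is an apparent circularity. The way around it is to construct all $\JJ_\alpha$ at once: apply the Standard Lemma not separately but to the single sequence obtained by concatenating, coordinatewise, a $\lambda$-indexed "product" that records the finite data of every coordinate simultaneously — equivalently, build the $\JJ_\alpha$ by transfinite recursion on $\alpha$, at stage $\alpha$ taking $P_\alpha$ to be $B$ together with the already-built $\JJ_{\alpha'}$ for $\alpha'<\alpha$ and the still-original $\II_{\alpha'}$ for $\alpha'>\alpha$, then observing at the end by a standard back-and-forth / compactness argument that the recursion can be arranged so that each $\JJ_\alpha$ is in fact indiscernible over $B$ together with \emph{all} the other $\JJ_{\alpha'}$. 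This "simultaneous stretching of cuts in a mutually indiscernible array" is a routine but slightly fiddly strengthening of the Standard Lemma; once it is in place, the translation-by-automorphism verification of the insertion property is immediate from the definitions, since inserting a single point at a cut and asking for indiscernibility over the rest is exactly a statement about the EM-type of one sequence over a fixed parameter set, which is preserved by our construction.
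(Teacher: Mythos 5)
Your approach is genuinely different from the paper's, and as written it has gaps in each of its three steps.

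First, the parameter set $P_\alpha$ you choose is too large. You include in $P_\alpha$ all of the tuples $a^{\beta'}_{\alpha'}$ for every $\beta' < \kappa$ and every $\alpha' \neq \alpha$, but the hypothesis only gives indiscernibility-after-insertion for one $\beta$ at a time. There is no reason that $\II_\alpha$ (with or without a single inserted subcolumn) should be indiscernible over the full set $\{a^{\beta'}_{\alpha'} : \beta' < \kappa, \alpha' \neq \alpha\}$, so your ``reading off at coordinate $\alpha$'' step does not actually follow. Second, the Standard Lemma produces a fresh sequence realizing a given EM-type; it does not produce a sequence agreeing with $\II_\alpha$ on $I^-_\alpha$ and $I^+_\alpha$. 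That literal agreement is part of what must be proved, and invoking the Standard Lemma does not deliver it. Third, the circularity you flag is real, but your remedy (``transfinite recursion \dots\ then a standard back-and-forth / compactness argument'') is not worked out, and I do not see how it closes: once you later replace $\II_{\alpha'}$ by $\JJ_{\alpha'}$, nothing in your construction guarantees that the earlier $\JJ_\alpha$ remains indiscernible over the modified family, because $\JJ_{\alpha'}$ bears no special relation to $\II_{\alpha'}$ beyond EM-equivalence.

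The paper's proof sidesteps all of this with one observation: when $J^-_\alpha$ (say) is finite, take $\JJ_\alpha$ to be a reindexed \emph{subsequence} of $\II_\alpha$ itself, choosing the new positions from $\II_\alpha\dhr_{\cc^-_\alpha \setminus I^-_\alpha}$. A family of subsequences of a mutually indiscernible family is automatically mutually indiscernible, and this remains true after any of the specified insertions, so all the desired implications are immediate with no circularity and no appeal to the Standard Lemma. The infinite cases of $J^\pm_\alpha$ and of $\lambda$ then follow by compactness, since every finite fragment of the required conditions is realized by the subsequence construction. That is the idea missing from your write-up.
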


\begin{proof}
	First we show that we can replace $\cc^-_0$ with $\dd_0^-$; i.e., we can find a suitable $\JJ^-_0$ so that we can replace $(b_i^0: i\in \cc_0^-)$ with $\II^-_0 + \JJ^-_0$.
	Let $I^-_0$ and $ J^-_0$ be as above.
	If $ J^-_0$ is finite, we may let $\JJ^-_0$ be any increasing sequence of the same size from $(b^0_i: I\in\cc^-_0 \setminus I^-_0)$.
	By compactness, this argument extends to the case where $J^-_0$ is infinite.
	We may now iterate to replace finitely many $\cc^\bullet_\alpha$.
	Finally, the case where $\lambda$ is infinite follows by compactness.
\end{proof}

\begin{prop}\label{prop:mutorth}
	Suppose $T$ is $m$-distal for some $m>0$. If $\II_0, \ldots, \II_{n-1}$ are mutually indiscernible over $B$, each containing a Dedekind cut $\cc_0, \ldots, \cc_{n-1}$, respectively, then $\limtp_{B\II_0\cdots \II_{n-1}}(\cc_0, \ldots, \cc_{n-1})$ is $m$-determined.
\end{prop}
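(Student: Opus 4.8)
The plan is to reduce the statement about mutually indiscernible sequences to a statement about a single indiscernible sequence with several Dedekind cuts, where we can apply the characterization of $m$-distality via skeletons (Proposition \ref{prop:skeleton}) together with the insertion lemma (Lemma \ref{lem:insert}). Concretely, I want to show that $\limtp_{B\II_0\cdots\II_{n-1}}(\cc_0,\ldots,\cc_{n-1})$ is $m$-determined, so suppose $\phi(x_0,\ldots,x_{n-1},d)$ lies in this limit type, with $d$ a finite tuple of parameters from $B\cup\II_0\cup\cdots\cup\II_{n-1}$. I must produce, for each increasing $\sigma\colon m\to n$, a formula $\psi_\sigma$ in $\limtp_{B\II_0\cdots\II_{n-1}}(\cc_{\sigma(0)},\ldots,\cc_{\sigma(m-1)})$ such that $\bigwedge_\sigma \psi_\sigma \vdash \phi$. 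I will argue by contradiction: if no such finite conjunction works, then by compactness there is a choice of $d$ and a consistent set of elements $A=(a_0,\ldots,a_{n-1})$ witnessing that every $m$-element subtuple of $A$ inserts (its members at the cuts $\cc_\alpha$) into the respective sequences while keeping the family mutually indiscernible over $B$, yet $\UU\models\neg\phi(a_0,\ldots,a_{n-1},d)$, so $A$ itself does not insert.

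The core idea is then to \emph{collate} the $n$ mutually indiscernible sequences into one indiscernible sequence. The right tool is Lemma \ref{lem:collate} (and the notation of Lemma \ref{lem:collate}): around each cut $\cc_\alpha$ I carve out a small window and replace that window by a fresh copy; the point is that after collation I have sequences $\JJ_\alpha$ which still control the insertion behavior of the $A_\beta$, but which are now short enough that I can concatenate them, together with the surrounding fixed initial/final segments, into a single long sequence $\II^\ast = \II_0' + a_0\text{-slot} + \II_1' + a_1\text{-slot} + \cdots + \II_{n-1}' + a_{n-1}\text{-slot} + (\text{tail})$. Because the $\II_\alpha$ were mutually indiscernible over $B$, this concatenation (after a suitable compactness/stretching argument and absorbing the parameters $d$ into the base, cf.\ Proposition \ref{prop:DRA}) is indiscernible over $B$, and the $n$ cuts $\cc_0,\ldots,\cc_{n-1}$ become $n$ distinct Dedekind cuts of $\II^\ast$. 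The hypothesis ``every $m$-element subtuple of $A$ inserts mutually indiscernibly'' translates, via Lemma \ref{lem:collate}, into ``every $m$-element subtuple of $A$ inserts into $\II^\ast$,'' while $\neg\phi(a_0,\ldots,a_{n-1},d)$ says $A$ does not insert into $\II^\ast$. Thus $\II^\ast$, viewed with an $(n{+}1)$-block partition, is not $m$-distal, and by Lemma \ref{lem:insert} (the reduction from $n$ cuts to $m{+}1$ cuts) and Proposition \ref{prop:skeleton} this contradicts the $m$-distality of $T$.

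The step I expect to be the main obstacle is the collation itself: making precise that the transition from ``mutually indiscernible, insert at $\cc_\alpha$'' to ``indiscernible in the concatenation, insert at the images of $\cc_\alpha$'' is faithful in both directions. Mutual indiscernibility is genuinely stronger than indiscernibility of the concatenation — inserting $a_\alpha$ into $\II_\alpha$ while preserving mutual indiscernibility means it must not disturb indiscernibility \emph{relative to all the other sequences}, which is exactly what gets encoded once the other sequences appear as fixed blocks to the left and right in $\II^\ast$. Lemma \ref{lem:collate} is designed precisely to handle the bookkeeping of replacing the cut-windows so that the relevant formulas are preserved; the work is to apply it with the family $(A_\beta:\beta<\kappa)$ ranging over (an enumeration of the possible) witnessing tuples and then to check that inserting $a_\alpha$ at $\dd_\alpha$ in the separate $\JJ_\alpha$ is the same as inserting it at the corresponding cut of the single sequence $\II^\ast$. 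A secondary technical point is handling the parameters $d$: they may come from several of the $\II_\alpha$, so one first extends each $\II_\alpha$ past $d$ or, more cleanly, replaces $B$ by $B\cup d$ and invokes that adding parameters does not raise distality rank (Proposition \ref{prop:DRA}), noting the limit type over $B\II_0\cdots\II_{n-1}$ is unchanged in the relevant sense. Once these identifications are in place, the contradiction with $m$-distality is immediate.
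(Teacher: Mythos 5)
Your central idea — collating the $n$ mutually indiscernible sequences into a single indiscernible sequence so that $m$-distality can be applied — is the right high-level plan and matches the paper's strategy. But the specific collation you propose, namely concatenating the $\II_\alpha$ end-to-end (with slots for the $a_\alpha$) into one long sequence $\II^\ast$ of elements, does not work, and this is not merely a bookkeeping issue to be "made precise."

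The problem is that mutual indiscernibility of $\II_0,\ldots,\II_{n-1}$ over $B$ does \emph{not} imply indiscernibility of any concatenation $\II_0+\cdots+\II_{n-1}$. For instance in DLO take $\II_0$ an increasing sequence and $\II_1$ a decreasing sequence disjoint from it: they are mutually indiscernible, but neither $\II_0+\II_1$ nor $\II_1+\II_0$ is indiscernible. Even when all the $\II_\alpha$ share an EM-type, the cross-sequence relations (which are exactly what mutual indiscernibility is about) need not be compatible with a single linear arrangement. Moreover, even if one could arrange $\II^\ast$ to be indiscernible, the notion "$a_\alpha$ inserts into $\II^\ast$ at a cut inside the $\II_\alpha$-block" is not equivalent to "$\II_\alpha+a_\alpha$ is indiscernible over $B\cup\bigcup_{\beta\ne\alpha}\II_\beta$." Indiscernibility of the concatenation is a reindexing statement for the whole long sequence, whereas mutual indiscernibility requires $\II_\alpha$ to be indiscernible with the other sequences held \emph{fixed} as parameters; these do not encode the same thing, and neither implies the other.

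What the paper actually does is collate by \emph{zipping}, not concatenating. Using increasing maps $\sigma_i\colon K\to I_i$ (and Lemma \ref{lem:collate} only to widen the window around each cut to make room for the $\sigma_i$), it forms
\[
\KK = \left(\left(b_0^{\sigma_0(k)},\ldots,b_{n-1}^{\sigma_{n-1}(k)}\right)\colon k\in K\right),
\]
a single indiscernible sequence of $n$-tuples indexed by a skeleton $K=K_0+\cdots+K_n$, with $\sigma_i(K_0+\cdots+K_i)\subseteq\cc_i^-$ and $\sigma_i(K_{i+1}+\cdots+K_n)\subseteq\cc_i^+$. Mutual indiscernibility of the $\II_i$ \emph{does} give indiscernibility of this tupled sequence (this is standard). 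Then the elements $\hat a_j$ are stretched by compactness to full $n$-tuples $\bar a_j$ with $j$-th coordinate $\hat a_j$, so that inserting $\bar a_j$ into $\KK$ at the $j$-th cut is exactly what encodes the hypothesis about $\hat a_j$ and the other sequences. After that, $m$-distality of $T_B$ (via Proposition \ref{prop:DRA}) yields the conclusion. So the essential idea you are missing is the passage to sequences of $n$-tuples — a different construction from concatenation, not a refinement of it.
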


\begin{proof}
	Suppose each $\II_i=(b_i^j:j\in I_i)$, and let $D=B\cup \bigcup_i \II_i$.
	Let $\hat{a}_0, \ldots, \hat{a}_{n-1} \in U$ such that 
	\begin{equation}\label{eq:mutorth1}\tag{$*$}
	\text{for all } i_0 < \cdots < i_{m-1} < n \text{ we have } \hat{a}_{i_0}\cdots \hat{a}_{i_{m-1}}\models \limtp_D(\cc_{i_0}, \ldots , \cc_{i_{m-1}}).
	\end{equation}
	Fix
	\begin{equation}\label{eq:mutorth2}\tag{$**$}
		\phi(x_0,\ldots, x_{n-1})\in \limtp_D(\cc_0, \ldots, \cc_{n-1}),
	\end{equation}
	and let $D'$ be the parameters of $\phi$. We will show that $\hat{a}_0\cdots \hat{a}_{n-1} \models \phi$.
	
	Construct an $n$-skeleton 
	\[
		\KK = \left( \left(b_0^{\sigma_0(k)}, \ldots, b_{n-1}^{\sigma_{n-1}(k)}\right) \,: \, k \in K \right)
	\] 
	with underlying index $K=K_0+ \cdots +K_n$ as follows.
	For $i<n$, let $\sigma_i: K \to I_i$ be an increasing map such that 
	\begin{itemize}
		\item $\sigma_i(K_0+ \cdots +K_i)\subseteq \cc_i^-$,
		\item $\sigma_i(K_{i+1}+ \cdots +K_n) \subseteq \cc^+_i$, and
		\item if $b_i^j\in D'$, then $j\in \sigma_i(K)$. 
	\end{itemize}
	If necessary, we can apply Lemma \ref{lem:collate} to replace a neighborhood of $\cc_i$ with one large enough to accommodate the image of $\sigma_i$ without disturbing $\II_i\cap D'$ or the validity of (\ref{eq:mutorth1}) and (\ref{eq:mutorth2}).
	
	By compactness, there is a sequence $A=(\bar{a}_0, \ldots, \bar{a}_{n-1})$ such that 
	each $\bar{a}_j=(a_0^j, \ldots, a_{n-1}^j)$ with $a^j_j=\hat{a}_j$, and
	every $m$-sized subsequence of $A$ inserts into $\KK$ indiscernibly over $B$. 
	Proposition \ref{prop:DRA} asserts that $T_B$ is $m$-distal, so $\KK$ is $m$-distal in $T_B$. It follows that the entire sequence $A$ inserts into $\KK$ indiscernibly over $B$.
	
	Since $\phi\in \limtp_D(\cc_0,\ldots, \cc_{n-1})$, if for each $j<n$, $K'_j$ is an end segment of $K_j$ such that for each $i<n$, the image $\sigma_i(K_j')$ avoids $D'$, then for all $(k_0, \ldots, k_{n-1})\in K'_0\times \cdots \times K'_{n-1}$, we have 
\[
	\UU \models \phi\left(b^{\sigma_0(k_0)}_0, \ldots, b^{\sigma_{n-1}(k_{n-1})}_{n-1}\right).
\]
	Furthermore, since $\KK\cup A$ is indiscernible over $B$, it follows that 
\[
	\UU\models \phi(\hat{a}_0,\ldots, \hat{a}_{n-1}).
\] 
\end{proof}

\begin{prop}\label{prop:distalortho}
	Suppose $T$ is $m$-distal and $n>m > 0$. If $p_0(x_0), \ldots, p_{n-1}(x_{n-1}) \in S_U$ are invariant over $B$ and commute pairwise, then the product $p_0 \otimes \cdots \otimes p_{n-1}$ is $m$-determined.
\end{prop}

\begin{proof}
	Let $p = p_0 \otimes \cdots \otimes p_{n-1}$, and let $\phi \in p$. Assume $B$ contains the parameters of $\phi$. Let $J = J_0 + \cdots + J_n$ with each $J_j = \mathbb{Z}$ in the standard order. Let $\II$ be a Morley sequence for $p$ over $B$ indexed by $J$. Let $\hat{A} = (\hat{a}_0, \ldots, \hat{a}_{n-1})$ be such that for all $\sigma: m \rightarrow n$ increasing, we have
	\[
		\hat{a}_{\sigma(0)} \cdots \hat{a}_{\sigma(m-1)} \models \left[p_{\sigma(0)} \otimes \cdots \otimes p_{\sigma(m-1)} \right] \dhr_{B \II}.
	\]
	Let $\JJ$ be a Morley sequence for $p$ over $B \cup \II \cup \hat{A}$ also indexed by $J$. For every $i < n$, let 
	\[
		\KK_i = \pi_i \left( \II_0 + \cdots + \II_i + \JJ_{i+1} + \cdots + \JJ_n \right)
	\]
	where $\pi_i$ is selecting the $i^\text{th}$ element of each tuple in the sequence,	and let
	\[
		\KK = \left( \left(b^j_0, \ldots, b^j_{n-1} \right) \, : \, j \in J  \right)
	\]
	where $b_i^j$ is the $j^\text{th}$ element of $\KK_i$. Since the $p_i$'s commute pairwise, it follows that $(\KK_i \, : \, i < n)$ is mutually indiscernible over $B$. Furthermore, the family remains mutually indiscernible over $B$ after inserting any $m$-element subset of $\hat{A}$, each $\hat{a}_i$ into $\KK_i$ at $\cc_i$. By compactness, we can find $A = (\bar{a}_0, \ldots, \bar{a}_{n-1})$ such that each $\bar{a}_j=\left(a^j_0, \ldots , a^j_{n-1}\right)$ with $a^j_j=\hat{a}_j$ and any $m$-element subset $A' \subseteq A$ inserts into $\KK$ indiscernibly over $B$. Since $\KK$ is $m$-distal, it follows that $A$ inserts indiscernibly over $B$, so $\UU \models \phi(\hat{a}_0, \ldots, \hat{a}_{n-1})$.
\end{proof}

\begin{theorem}\label{thm:distalortho}
	If $T$ is NIP and $m>0$, then the following are equivalent:
	\begin{enumerate}[(i)]
		\item $T$ is $m$-distal.
		\item For all $n> m$ and all invariant types $p_0(x_0),\ldots,p_{n-1}(x_{n-1})\in S_U$ which commute pairwise, the product $p_0\otimes \cdots \otimes p_{n-1}$ is $m$-determined.
		\item For all invariant types $p_0(x_0),\ldots,p_m(x_m)\in S_U$ which commute pairwise, the product $p_0\otimes \cdots \otimes p_m$ is $m$-determined.
	\end{enumerate}
\end{theorem}
\begin{proof}
	$(i) \Rightarrow (ii)$: Proposition \ref{prop:distalortho}.
	
	$(ii) \Rightarrow (iii)$: Immediate.
	
	$(iii) \Rightarrow (i)$: Assume $(iii)$ holds but $(i)$ does not. Let the skeleton $\II = \II_0+\cdots +\II_{m+1}$ and $(\phi, A, B)$ witness that $T$ is not $m$-distal (see Definition \ref{def:witness}).
	Fact \ref{fact:finsat} asserts that each $\lim(\cc_i^-)$ is invariant over $\II$.
	Furthermore, Lemma \ref{lem:limmutind} asserts that
	\[
	\lim(\cc_0^-,\ldots, \cc_m^-) = \lim(\cc_0^-)\otimes \cdots \otimes \lim(\cc_m^-)
	\]
	and that $\lim(\cc_i^-)$ commutes with $\lim(\cc_j^-)$ for $i\neq j$, so the product $\lim(\cc_0^-,\ldots, \cc_m^-)$ is $m$-determined.
	By compactness, we can choose 
	 \[
	 \psi_\sigma\left(x_{\sigma(0)},\ldots, x_{\sigma(m-1)}\right)\in \lim\left(\cc_{\sigma(0)}^-, \ldots, \cc_{\sigma(m-1)}^-\right)
	 \]
	 for each increasing map $\sigma:m\to m+1$ such that 
	 \begin{equation} \label{eq:mdet} \tag{$*$}
	 \bigwedge_{\sigma} \psi_\sigma\left(x_{\sigma(0)}, \ldots, x_{\sigma(m-1)}\right) \quad \vdash \quad \phi(\bar{b}_0,x_0, \ldots, \bar{b}_m,x_{m},\bar{b}_{m+1}).
	 \end{equation}
	 Let $D$ be the parameters of $\bigwedge_{\sigma} \psi_\sigma$.
	 By the Base Change Lemma (Lemma \ref{lem:strongbasechange}), there is $A' \equiv_\II A$ such that for each $\sigma$, we have
	 \[
	 a'_{\sigma(0)}\cdots a'_{\sigma(m-1)} \models \limtp_D\left(\cc_{\sigma(0)}^-,\cdots,\cc_{\sigma(m-1)}^-\right),
	 \]
	 but this contradicts (\ref{eq:mdet}) since 
	 \[
	 a'_0 \cdots a'_m \not \models \phi(\bar{b}_0,x_0, \ldots, \bar{b}_m,x_{m},\bar{b}_{m+1}).
	 \]
\end{proof}

\section{Distality Rank and Geometric Stability}\label{s:geometric stability}

Throughout this section, we assume $T$ is stable.

\begin{definition}
    Given $k>0$, we say $T$ is \emph{$k$-trivial} if over every small $D \subseteq U$, there are no $(k+2)$-cycles.
    Moreover, we say $T$ is \emph{trivial} if it is $1$-trivial.
\end{definition}

\noindent See Section 1 of \cite{goode:trivialconsiderations}.

\begin{lemma}\label{lem:cycle implies not determined}
Given $m>0$, a small model $\MM \models T$, and tuples $a_0, \dots, a_m \in U^{<\omega}$, if $\bar{a}$ is an $(m+1)$-cycle over $M$, then the product
\[\tp_M(a_0)\uhr^U \otimes \cdots \otimes \tp_M(a_{m})\uhr^U\]
is not $m$-determined.
\end{lemma}
\begin{proof}
Suppose $\bar{a}$ is an $(m+1)$-cycle over $M$. Lemma \ref{lem:independent iff Morley} asserts that
\[\bar{a}\setminus a_j \models \left(\bigotimes_{i\neq j}\tp_M(a_i)\uhr^U\right)\dhr_M\]
for each $j \leq m$.
Let $q$ be the unique global nonforking extension of $\tp_M(\bar{a})$.
By stationarity, we must have 
\[q\cap L(\bar{x}\setminus x_j) =\bigotimes_{i\neq j}\tp_M(a_i)\uhr^U\]
for each $j \leq m$. 
However, since $\bar a$ is dependent over $M$, Lemma \ref{lem:independent iff Morley} asserts that 
\[q\dhr_M \neq \left(\bigotimes_{i}\tp_M(a_i)\uhr^U\right)\dhr_M.\]
\end{proof}

\begin{prop}\label{prop:distal implies trivial}
Given $k>0$, if $T$ is $(k+1)$-distal, then it must also be $k$-trivial.
\end{prop}
\begin{proof}
Suppose $T$ is not $k$-trivial. 
Using cycle extension (Lemma \ref{lem:cycle extension}), we can find a small model $\mathcal{M} \models T$ and a $(k+2)$-cycle $\bar{a}$ over $M$. By Lemma \ref{lem:cycle implies not determined}, the product 
\[
\tp_M(a_0)\uhr^U \otimes \cdots \otimes \tp_M(a_{k+1})\uhr^U
\]
is not $(k+1)$-determined, so the result follows by Theorem \ref{thm:distalortho}.
\end{proof}

Since the product of any two global invariant types commute in the stable context (Lemma \ref{lem:stable types commute}), there is an analogue of Theorem \ref{thm:distalortho} involving powers of invariant types rather than mixed-factor products.

\begin{prop}\label{prop:stable m-determined products}
Given $m>0$, the following are equivalent:
\begin{enumerate}[(i)]
    \item $T$ is $m$-distal.
    \item For every global invariant type $p\in S_U$ and every $n > m$, the product $p^n$ is $m$-determined.
    \item For every global invariant type $p\in S_U$ and every $n > m$, the product $p^n$ is $(n-1)$-determined.
\end{enumerate} 
\end{prop}
\begin{proof}
$(i) \Rightarrow (ii)$: Theorem \ref{thm:distalortho}.

$(ii) \Rightarrow (iii)$: Obvious.

$(iii) \Rightarrow (i)$: Suppose $T$ is not $m$-distal. There exists $\Gamma \in S^{\EM}$ which is not $m$-distal. Choose a skeleton $\II_0 + \II_1 \models^{\EM} \Gamma.$ Lemma \ref{lem:Morley sequence for ultralimit} asserts that $\II_0 \models p^\omega\dhr_{\II_1}$ where $p = \lim(\II_1^\ast)$.
Let $\MM \models T$ be $\aleph_1$-saturated with $\II_1 \subseteq M$. 
Choose a skeleton 
\[
\JJ = \JJ_0 + \cdots + \JJ_{m+1} \models^{\EM} p^\omega \dhr_M.
\]
By the Base Change Lemma (Lemma \ref{lem:strongbasechange}), there exists $(\phi, A, B)$ witnessing that $\JJ$ is not $m$-distal in $T_M$. 
It follows that $p^n \dhr_M$ is not $(n-1)$-determined, where $n = |A|+|B|$, so by Lemma \ref{lem:m-determinacy for saturated models}, the global product $p^n$ is not $(n-1)$-determined.
\end{proof}

\begin{lemma}\label{lem:stable cycles}
Given $n>m>0$ and a global invariant type $p \in S_U(x)$, if $p^{n}$ is not $(n-1)$-determined, then for some small base $D \subseteq U$ over which $p$ is stationary, the set of realizations of $p \dhr_D$ contains an $(m+1)$-cycle over $D$.
\end{lemma}

\begin{proof}
Suppose $p$ is invariant over some small $B \subseteq U$ and the product $p^{n}$ is not $(n-1)$-determined. 
Choose $\phi(\bar{x},d)\in p^{n}$ such that 
\[
\bigcup_{i<n}p^{n-1}(\bar{x}\setminus x_i)\cup \{\lnot \phi(\bar{x},d)\}
\] 
is consistent. 
Let $\MM \models T$ be a small model containing $Bd$, and let
\[
\bar a \models \bigcup_{j \leq m} p^{n-1}\dhr_{M}(\bar x \setminus x_i) \cup \{\lnot\phi(\bar{x},d)\}.
\]
Lemma \ref{lem:independent iff Morley} asserts that $a_0 \cdots a_m$ is an $(m+1)$-cycle over $M a_{m+1} \cdots a_{n-1}$.
\end{proof}

\begin{prop}\label{prop:stable distal cycles}
    Given $m > 0$, the following are equivalent:
    \begin{enumerate}[(i)]
        \item $T$ is $m$-distal.
        \item For all small models $\MM \models T$ and all types $p \in S_M$, the set of realizations $p(U)$ contains no $(m+1)$-cycles over $M$.
        \item For all small sets $D \subseteq U$ and all types $p \in S_D$, if $p$ is stationary, then its set of realizations $p(U)$ contains no $(m+1)$-cycles over $D$.
    \end{enumerate}
\end{prop}

\begin{proof}
    $(i) \Rightarrow (ii)$: Lemma \ref{lem:cycle implies not determined} and Theorem \ref{thm:distalortho}.
    
    $(ii) \Rightarrow (iii)$: Cycle Extension (Lemma \ref{lem:cycle extension}).
    
    $(iii) \Rightarrow (i)$: Proposition \ref{prop:stable m-determined products} and Lemma \ref{lem:stable cycles}.
\end{proof}

\begin{theorem}\label{thm:distal iff trivial}
Given $k>0$, it follows that $T$ is $k$-trivial if and only if it is $(k+1)$-distal.
\end{theorem}
\begin{proof}
($\Rightarrow$): Proposition \ref{prop:stable distal cycles}.

($\Leftarrow$): Proposition \ref{prop:distal implies trivial}.
\end{proof}

\begin{definition}
    Given $k>0$, a small set $D \subseteq U$, and a type $p \in S_D$, we say that $p$ is \emph{$k$-trivial} if its realization set $p(U)$ contains no $(k+2)$-cycles over $D$.
\end{definition}

\begin{cor}
    If $T$ is not $k$-trivial, then for some small model $\MM \models T$, there is a type $p \in S_M$ which is not $k$-trivial.
\end{cor}

\begin{prop}\label{prop:DR invariant Teq}
    In the stable context, distality rank does not change when passing to $T^{\eq}$; i.e., $\DR(T) = \DR(T^{\eq})$.
\end{prop}

\begin{proof}
    Theorem \ref{thm:distal iff trivial} and Lemma \ref{lem:imaginary cycles}.
\end{proof}

\begin{prop}\label{prop:DR gap for superstable}
    If $T$ is superstable, then the following hold:
    \begin{enumerate}[(i)]
        \item $T$ is trivial if and only if $\DR(T) = 2$.
        \item $T$ is not trivial if and only if $\DR(T) = \omega$.
    \end{enumerate}
\end{prop}
\begin{proof}
Suppose $T$ is not trivial. After potentially passing to $T^{\eq}$, by \cite[Proposition 2]{goode:trivialconsiderations}, we can find a $3$-cycle among the set of realizations of some regular type. Since forking dependence defines a pregeometry on this set, we can ``expand'' the $3$-cycle to form arbitrarily large cycles using the construction outlined in the proof of \cite[Proposition 3]{goode:trivialconsiderations}. The result now follows by Theorem \ref{thm:distal iff trivial} and Proposition \ref{prop:DR invariant Teq}.
\end{proof}

\section{Conclusion} \label{s:conclusion}
Distality rank and strong distality rank provide new ways to classify $\EM$-types and theories. Strong distality rank is always greater than or equal to distality rank (Proposition \ref{prop:strongdistal}), and in Subsection \ref{ss:ORPG}, we gave an example of an $\EM$-type where this inequality is strict. Currently, we are not aware of any theories for which the two ranks disagree.

In Section \ref{s:examples}, we showed that both the distality rank and the strong distality rank hierarchies are full, giving examples of theories for each rank from $1$ to $\omega$. Distal theories are precisely those with distality rank 1 or, equivalently, strong distality rank 1.
IP theories preclude rank $1$ (Corollary \ref{cor:distal implies NIP}) but may have any other rank from $2$ to $\omega$. 
Stable theories also preclude rank $1$ (Proposition \ref{prop:stablenotdistal}). 
Moreover, superstable theories may only have distality rank $2$ or $\omega$, with nothing in between (Proposition \ref{prop:DR gap for superstable}). 
Whether or not this gap persists for stable theories is equivalent to a long-standing open question posed by Goode in 1991 at the conclusion of \cite{goode:trivialconsiderations}.
\begin{ques}[Goode]\label{ques:goode}
 Can a stable theory be $k$-trivial for some $k>1$ but not trivial?
\end{ques}

Distality rank gives us a new way to approach Goode's question. Furthermore, when restated in terms of distality rank, it naturally extends to a general context which is no longer restricted to stable theories. 

\begin{ques}\label{ques:gap for property}
Given a certain model theoretic tameness property, do any theories with that property have distality rank $m$ such that $2<m<\omega$? 
\end{ques}

\noindent For stable theories, this is Question \ref{ques:gap for stable} which, according to Theorem \ref{thm:distal iff trivial}, is equivalent to Goode's question. For NIP theories, this is Question \ref{ques:gap for NIP}.
If there is an unstable NIP theory in the gap, it seems likely we could produce a stable theory in the gap using decomposition, removing the distal part and keeping the stable part.
Thus, we conjecture that Questions \ref{ques:gap for stable} and \ref{ques:gap for NIP} are equivalent. Due to the strong relationship between $m$-distality and $k$-triviality, we believe that other ideas from geometric stability theory can be exported beyond the tame realm of stable theories (or even simple theories) using a dependence relation based on $m$-distality.

Since the concept of distality was originally used to decompose NIP theories into their stable and distal components, it  might seem natural to assume that distality rank separates NIP theories into a spectrum with distal theories having rank 1 and stable theories, rank $\omega$. 
This, however, is erroneous since several stable theories have rank 2.
Rather, distality rank measures the freedom with which we can combine single-variable types to form multi-variable types. 
Specifically, in Section \ref{s:type determinacy}, we see that $m$-distality requires certain products to be $m$-determined (Proposition \ref{prop:distalortho}). Furthermore, for NIP theories, this behavior fully characterizes $m$-distality (Theorem \ref{thm:distalortho}). 
Other nice properties hold in an NIP context.
For example, the order type of a Dedekind partition does not affect its distality rank (Theorem \ref{thm:order type does not matter for NIP}), and the distality rank of a theory is invariant under base change (Theorem \ref{thm:basechange}). 
It would be interesting to determine whether or not these properties continue to hold outside NIP.
Regardless of context, adding named parameters does not increase the distality rank or the strong distality rank of a theory, and the order type of a Dedekind partition does not affect its strong distality rank (Corollary \ref{cor:SDR otype}).

Finally, since $m$-distality is a strengthening of $m$-dependence (Proposition \ref{prop:m-distal implies m-dependent}), we expect further research to garner improvements to several existing combinatorial results for $m$-dependent structures in much the same fashion that research in distality has improved results previously known for NIP.  
In particular, we conjecture that requiring $m$-distality, instead of $m$-dependence, will yield a more homogeneous version of the Chernikov--Towsner hypergraph regularity lemma \cite[Theorem 1.1]{chernikov:HypergraphRegularity}.

\bibliography{bib}
\bibliographystyle{amsplain}

\end{document}